\newtheorem{theorem}{Theorem}
\newtheorem{lemma}{Lemma}
\newtheorem{proposition}{Proposition}
\newtheorem{claim}{Claim}
\newtheorem{definition}{Definition}
\newtheorem{conjecture}{Conjecture}
\Crefname{claim}{Claim}{Claims}
\Crefname{conjecture}{Conjecture}{Conjectures}
\Crefname{Conjecture}{Conjecture}{Conjectures}
\crefname{conjecture}{conjecture}{conjectures}
\DeclareMathOperator*{\argmax}{arg\,max}
\newcommand{\csproblem}[3]{
    \begin{center}
    \fbox{\begin{tabular}{lp{8cm}}
    {\small PROBLEM:} : & #1 \\
    {\small INPUT} : & #2 
    \\ 
    {\small OUTPUT} : & #3\\
    \end{tabular}}
    \end{center}
    }
\newcommand{\packit}{\textsc{PackIt!}}
\newcommand{\onesquare}[4]{
    \node[draw, fill=#3, opacity=1, minimum width=0.75cm, minimum height =0.75cm] (#1#2) at (#1, #2) {#4};
 }
\newcommand{\onesquareTwo}[4]{
    \node[draw, fill=#3, opacity=1, minimum width=0.6cm, minimum height =0.6cm] (#1#2) at (#1, #2) {\tiny #4};
 }
\definecolor[named]{ACMBlue}{cmyk}{1,0.1,0,0.1}
\definecolor[named]{ACMYellow}{cmyk}{0,0.16,1,0}
\definecolor[named]{ACMOrange}{cmyk}{0,0.42,1,0.01}
\definecolor[named]{ACMRed}{cmyk}{0,0.90,0.86,0}   
\definecolor[named]{ACMLightRed}{cmyk}{0,0,0,0.35}
\definecolor[named]{ACMLightBlue}{cmyk}{0.49,0.01,0,0}
\definecolor[named]{ACMGreen}{cmyk}{0.20,0,1,0.19}
\definecolor[named]{ACMPurple}{cmyk}{0.55,0.6,0.1,0.15}
\definecolor[named]{ACMPurple2}{cmyk}{0.04,0.7,0.01,0.01}
\definecolor[named]{ACMPurple3}{cmyk}{0.04,0.7,0.01,0.01}
\definecolor[named]{ACMDarkBlue}{cmyk}{1,0.58,0,0.21}
\definecolor{redorange}{rgb}{0.878431, 0.235294, 0.192157}
\definecolor{lightblue}{rgb}{0.552941, 0.72549, 0.792157}
\definecolor{clearyellow}{rgb}{0.964706, 0.745098, 0}
\definecolor{midyellow}{rgb}{0.764706, 0.645098, 0.5}
\definecolor{clearorange}{rgb}{0.917647, 0.462745, 0}
\definecolor{mildgray}{rgb}{0.54902, 0.509804, 0.47451}
\definecolor{softblue}{rgb}{0.643137, 0.858824, 0.909804}
\definecolor{bluegray}{rgb}{0.141176, 0.313725, 0.603922}
\definecolor{lightgreen}{rgb}{0.709804, 0.741176, 0}
\definecolor{redpurple}{rgb}{0.835294, 0, 0.196078}
\definecolor{midblue}{rgb}{0, 0.592157, 0.662745}
\definecolor{clearpurple}{rgb}{0.67451, 0.0784314, 0.352941}
\definecolor{browngreen}{rgb}{0.333333, 0.313725, 0.145098}
\definecolor{darkestpurple}{rgb}{0.396078, 0.113725, 0.196078}
\definecolor{greypurple}{rgb}{0.294118, 0.219608, 0.298039}
\definecolor{darkturqoise}{rgb}{0, 0.239216, 0.298039}
\definecolor{darkbrown}{rgb}{0.305882, 0.211765, 0.160784}
\definecolor{midgreen}{rgb}{0.560784, 0.6, 0.243137}
\definecolor{darkred}{rgb}{0.576471, 0.152941, 0.172549}
\definecolor{darkpurple}{rgb}{0.313725, 0.027451, 0.470588}
\definecolor{darkestblue}{rgb}{0, 0.156863, 0.333333}
\definecolor{lightpurple}{rgb}{0.776471, 0.690196, 0.737255}
\definecolor{softgreen}{rgb}{0.733333, 0.772549, 0.572549}
\definecolor{offwhite}{rgb}{0.839216, 0.823529, 0.768627}
\definecolor{brightgreen}{rgb}{0.85, 0.98, 0.01}
\definecolor{sunriseorange}{rgb}{0.98, 0.54, 0.22}
\definecolor{twilightblue}{rgb}{0.11, 0.22, 0.42}
\definecolor{forestgreen}{rgb}{0.13, 0.55, 0.13}
\definecolor{stormygray}{rgb}{0.4, 0.4, 0.47}
\definecolor{candypink}{rgb}{0.88, 0.44, 0.88}
\definecolor{lavenderpurple}{rgb}{0.7, 0.5, 0.8}
\definecolor{fieryred}{rgb}{0.8, 0.12, 0.12}
\definecolor{oceanblue}{rgb}{0.16, 0.5, 0.73}
\definecolor{goldenyellow}{rgb}{0.95, 0.82, 0.12}
\definecolor{mintgreen}{rgb}{0.24, 0.71, 0.54}
\definecolor{sunsetorange}{rgb}{0.98, 0.37, 0.22}
\definecolor{nightblue}{rgb}{0.03, 0.12, 0.26}
\definecolor{meadowgreen}{rgb}{0.48, 0.73, 0.29}
\definecolor{rosepink}{rgb}{0.96, 0.56, 0.67}
\definecolor{skyblue}{rgb}{0.53, 0.81, 0.92}
\definecolor{earthbrown}{rgb}{0.4, 0.26, 0.13}
\definecolor{icegray}{rgb}{0.6, 0.76, 0.76}
\definecolor{lemonyellow}{rgb}{1, 0.89, 0.31}
\definecolor{sapphireblue}{rgb}{0.06, 0.32, 0.73}
\definecolor{emeraldgreen}{rgb}{0.04, 0.78, 0.47}
\definecolor{rubyred}{rgb}{0.6, 0.07, 0.11}
\definecolor{violet}{rgb}{0.56, 0.0, 1.0}
\definecolor{honeygold}{rgb}{0.87, 0.72, 0.53}
\definecolor{charcoalgray}{rgb}{0.21, 0.27, 0.31}
\definecolor{peach}{rgb}{1.0, 0.8, 0.64}
\definecolor{azure}{rgb}{0.0, 0.5, 1.0}
\DeclareMathAlphabet\EuRoman{U}{eur}{m}{n}
\SetMathAlphabet\EuRoman{bold}{U}{eur}{b}{n}
\newcommand{\euler}{\EuRoman}
\newcommand{\numa}{\textcolor{white}{$\euler{1}$}}
\newcommand{\numb}{\textcolor{white}{$\euler{2}$}}
\newcommand{\numc}{\textcolor{white}{$\euler{3}$}}
\newcommand{\numd}{\textcolor{white}{$\euler{4}$}}
\newcommand{\nume}{\textcolor{white}{$\euler{5}$}}
\newcommand{\numf}{\textcolor{white}{$\euler{6}$}}
\newcommand{\numg}{\textcolor{white}{$\euler{7}$}}
\newcommand{\numh}{\textcolor{white}{$\euler{8}$}}
\newcommand{\numi}{\textcolor{white}{$\euler{9}$}}
\newcommand{\numj}{\textcolor{white}{$\euler{10}$}}
\newcommand{\numk}{\textcolor{white}{$\euler{11}$}}
\newcommand{\numl}{\textcolor{white}{$\euler{12}$}}
\newcommand{\numm}{\textcolor{white}{$\euler{13}$}}
\newcommand{\numn}{\textcolor{white}{$\euler{14}$}}
\newcommand{\numo}{\textcolor{white}{$\euler{15}$}}
\newcommand{\nump}{\textcolor{white}{$\euler{16}$}}
\newcommand{\numq}{\textcolor{white}{$\euler{17}$}}
\newcommand{\numr}{\textcolor{white}{$\euler{18}$}}
\newcommand{\nums}{\textcolor{white}{$\euler{19}$}}
\newcommand{\numt}{\textcolor{white}{$\euler{20}$}}
\newcommand{\numu}{\textcolor{white}{$\euler{21}$}}
\newcommand{\numv}{\textcolor{white}{$\euler{22}$}}
\newcommand{\numw}{\textcolor{white}{$\euler{23}$}}
\newcommand{\numx}{\textcolor{white}{$\euler{24}$}}
\newcommand{\numy}{\textcolor{white}{$\euler{25}$}}
\newcommand{\numz}{\textcolor{white}{$\euler{26}$}}
\newcommand{\numA}{\textcolor{white}{$\euler{27}$}}
\newcommand{\numB}{\textcolor{white}{$\euler{28}$}}
\newcommand{\numC}{\textcolor{white}{$\euler{29}$}}
\newcommand{\numD}{\textcolor{white}{$\euler{30}$}}
\newcommand{\numE}{\textcolor{white}{$\euler{31}$}}
\newcommand{\numF}{\textcolor{white}{$\euler{32}$}}
\newcommand{\numG}{\textcolor{white}{$\euler{33}$}}
\newcommand{\numH}{\textcolor{white}{$\euler{34}$}}
\newcommand{\numI}{\textcolor{white}{$\euler{35}$}}
\newcommand{\numJ}{\textcolor{white}{$\euler{36}$}}
\newcommand{\numK}{\textcolor{white}{$\euler{37}$}}
\newcommand{\numL}{\textcolor{white}{$\euler{38}$}}
\newcommand{\numM}{\textcolor{white}{$\euler{39}$}}
\newcommand{\numN}{\textcolor{white}{$\euler{40}$}}
\newcommand{\numO}{\textcolor{white}{$\euler{41}$}}
\newcommand{\numP}{\textcolor{white}{$\euler{42}$}}
\newcommand{\numQ}{\textcolor{white}{$\euler{43}$}}
\newcommand{\numR}{\textcolor{white}{$\euler{44}$}}
\newcommand{\numS}{\textcolor{white}{$\euler{45}$}}
\newcommand{\numT}{\textcolor{white}{$\euler{46}$}}
\newcommand{\numU}{\textcolor{white}{$\euler{47}$}}
\newcommand{\numV}{\textcolor{white}{$\euler{48}$}}
\newcommand{\numW}{\textcolor{white}{$\euler{49}$}}
\newcommand{\numX}{\textcolor{white}{$\euler{50}$}}
\newcommand{\numY}{\textcolor{white}{$\euler{51}$}}
\newcommand{\numZ}{\textcolor{white}{$\euler{52}$}}
\newcommand{\numaa}{\textcolor{white}{$\euler{53}$}}
\newcommand{\numab}{\textcolor{white}{$\euler{54}$}}
\newcommand{\numac}{\textcolor{white}{$\euler{55}$}}
\newcommand{\numad}{\textcolor{white}{$\euler{56}$}}
\newcommand{\numae}{\textcolor{white}{$\euler{57}$}}
\newcommand{\numaf}{\textcolor{white}{$\euler{58}$}}
\newcommand{\numag}{\textcolor{white}{$\euler{59}$}}
\newcommand{\numah}{\textcolor{white}{$\euler{60}$}}
\newcommand{\numai}{\textcolor{white}{$\euler{61}$}}
\newcommand{\numaj}{\textcolor{white}{$\euler{62}$}}
\newcommand{\numak}{\textcolor{white}{$\euler{63}$}}
\newcommand{\numal}{\textcolor{white}{$\euler{64}$}}
\newcommand{\numam}{\textcolor{white}{$\euler{65}$}}
\newcommand{\numan}{\textcolor{white}{$\euler{66}$}}
\newcommand{\numao}{\textcolor{white}{$\euler{67}$}}
\newcommand{\numap}{\textcolor{white}{$\euler{68}$}}
\newcommand{\numaq}{\textcolor{white}{$\euler{69}$}}
\newcommand{\numar}{\textcolor{white}{$\euler{70}$}}
\newcommand{\cola}{darkpurple} 
\newcommand{\colb}{midgreen!60!brightgreen} 
\newcommand{\colc}{midblue} 
\newcommand{\cold}{clearorange} 
\newcommand{\cole}{redorange} 
\newcommand{\colf}{clearyellow} 
\newcommand{\colg}{midyellow!80!redorange} 
\newcommand{\colh}{lightgreen} 
\newcommand{\coli}{softblue!70!lightblue} 
\newcommand{\colj}{clearpurple} 
\newcommand{\colk}{softgreen!50!lightgreen} 
\newcommand{\coll}{bluegray} 
\newcommand{\colm}{greypurple!60!darkpurple} 
\newcommand{\coln}{redpurple} 
\newcommand{\colo}{lightpurple!80!clearpurple} 
\newcommand{\colp}{mildgray} 
\newcommand{\colq}{softgreen} 
\newcommand{\colr}{green!50!black} 
\newcommand{\cols}{darkestpurple} 
\newcommand{\colt}{darkturqoise} 
\newcommand{\colu}{lightblue} 
\newcommand{\colv}{browngreen!75!midgreen} 
\newcommand{\colw}{darkred} 
\newcommand{\colx}{darkestblue} 
\newcommand{\coly}{softgreen!50!brightgreen} 
\newcommand{\colz}{darkbrown} 
\newcommand{\colA}{sunriseorange} 
\newcommand{\colB}{twilightblue} 
\newcommand{\colC}{forestgreen} 
\newcommand{\colD}{stormygray} 
\newcommand{\colE}{candypink} 
\newcommand{\colF}{lavenderpurple} 
\newcommand{\colG}{fieryred} 
\newcommand{\colH}{oceanblue} 
\newcommand{\colI}{goldenyellow} 
\newcommand{\colJ}{mintgreen} 
\newcommand{\colK}{sunsetorange} 
\newcommand{\colL}{nightblue} 
\newcommand{\colM}{meadowgreen} 
\newcommand{\colN}{rosepink} 
\newcommand{\colO}{skyblue} 
\newcommand{\colP}{earthbrown} 
\newcommand{\colQ}{icegray} 
\newcommand{\colR}{lemonyellow} 
\newcommand{\colS}{sapphireblue} 
\newcommand{\colT}{emeraldgreen} 
\newcommand{\colU}{rubyred} 
\newcommand{\colV}{violet} 
\newcommand{\colW}{honeygold} 
\newcommand{\colX}{charcoalgray} 
\newcommand{\colY}{peach} 
\newcommand{\colZ}{azure} 
\definecolor{colaa}{rgb}{0.82, 0.66, 0.22} 
\definecolor{colab}{rgb}{0.69, 0.88, 0.90} 
\definecolor{colac}{rgb}{0.94, 0.5, 0.5} 
\definecolor{colad}{rgb}{0.0, 0.42, 0.24} 
\definecolor{colae}{rgb}{0.67, 0.88, 0.69} 
\definecolor{colaf}{rgb}{0.99, 0.76, 0.80} 
\definecolor{colag}{rgb}{0.75, 0.75, 0.75} 
\definecolor{colah}{rgb}{0.62, 0.32, 0.17} 
\definecolor{colai}{rgb}{0.53, 0.67, 0.81} 
\definecolor{colaj}{rgb}{0.85, 0.65, 0.13} 
\definecolor{colak}{rgb}{0.48, 0.25, 0.0}  
\definecolor{colal}{rgb}{0.29, 0.0, 0.51}  
\definecolor{colam}{rgb}{0.82, 0.41, 0.12} 
\definecolor{colan}{rgb}{1.0, 0.84, 0.0}   
\definecolor{colao}{rgb}{0.28, 0.24, 0.55} 
\definecolor{colap}{rgb}{0.0, 0.2, 0.4}    
\definecolor{colaq}{rgb}{0.93, 0.51, 0.93} 
\definecolor{colar}{rgb}{0.0, 0.5, 0.0}    
\definecolor{colas}{rgb}{0.55, 0.0, 0.0}   
\definecolor{colat}{rgb}{0.71, 0.40, 0.11} 
\definecolor{colau}{rgb}{0.8, 0.52, 0.25}  
\definecolor{colav}{rgb}{0.64, 0.58, 0.5}  
\definecolor{colaw}{rgb}{0.96, 0.87, 0.7}  
\definecolor{colax}{rgb}{0.91, 0.84, 0.42} 
\definecolor{colay}{rgb}{0.55, 0.76, 0.29} 
\definecolor{colaz}{rgb}{0.0, 0.75, 1.0}   
\newcommand{\colaa}{colaa}
\newcommand{\colab}{colab}
\newcommand{\colac}{colac}
\newcommand{\colad}{colad}
\newcommand{\colae}{colae}
\newcommand{\colaf}{colaf}
\newcommand{\colag}{colag}
\newcommand{\colah}{colah}
\newcommand{\colai}{colai}
\newcommand{\colaj}{colaj}
\newcommand{\colak}{colak}
\newcommand{\colal}{colal}
\newcommand{\colam}{colam}
\newcommand{\colan}{colan}
\newcommand{\colao}{colao}
\newcommand{\colap}{colap}
\newcommand{\colaq}{colaq}
\newcommand{\colar}{colar}
\newcommand{\colGG}{offwhite}
\DeclareRobustCommand{\wordlify}[2]{
    {\protect
    \hspace{-15pt}
    \raisebox{-5pt}{
        \foreach \l [count=\c, evaluate=\c as \co using {#2[\c-1]}] in #1 {
            \ifnum \co=0
                \SquareBox{\l}{\cola}{2pt}
            \else
                \ifnum \co=1
                    \SquareBox{\l}{\colb}{2pt}%
                \else
                    \SquareBox{\l}{\colc}{2pt}%
                \fi
            \fi
            \hspace{-11pt}
        }
    }
    }
}
\newdimen\@myBoxHeight%
\newdimen\@myBoxDepth%
\newdimen\@myBoxWidth%
\newdimen\@myBoxSize%
\DeclareRobustCommand{\SquareBox}[3]{%
     \settoheight{\@myBoxHeight}{#3}
    \settodepth{\@myBoxDepth}{#3}
    \settowidth{\@myBoxWidth}{#3}
    \pgfmathsetlength{\@myBoxSize}{max(\@myBoxWidth,(\@myBoxHeight+\@myBoxDepth))}%
    \protect\tikz \node [shape=rectangle, draw, text=white, minimum size=\@myBoxSize,
                 fill=#2, inner sep=0, outer sep=0pt] 
                 {\textcolor{white}{$\euler{#1}$}};%
}%
\title[\packit: Gamified Rectangle Packing]{\wordlify{{P,a,c,k,I,t,!}}{{0,1,1,1,2,2,2}}\\  \vspace{6pt} Gamified Rectangle Packing} 
\author{$^\star$Thomas Garrison, Marijn J.H. Heule, and Bernardo Subercaseaux}
\address{Carnegie Mellon University, Pittsburgh, PA 15213, USA. $^\star$Authors are sorted alphabetically.}
\keywords{PackIt!, rectangle packing, SAT, NP-hardness} 
\begin{document}

\begin{abstract}
We present and analyze \packit, a turn-based game consisting of packing rectangles on an $n \times n$ grid. \packit~can be easily played on paper, either as a competitive two-player game or in \emph{solitaire} fashion. On the $t$-th turn, a rectangle of area $t$ or $t+1$ must be placed in the grid. In the two-player format of~\packit~whichever player places a rectangle last wins, whereas the goal in the solitaire variant is to perfectly pack the $n \times n$ grid. We analyze necessary conditions for the existence of a perfect packing over $n \times n$, then present an automated reasoning approach that allows finding perfect games of~\packit~up to $n = 50$ which includes a novel SAT-encoding technique of independent interest, and conclude by proving an NP-hardness result.
\end{abstract}

\maketitle

\vspace{-22pt}
\section{Introduction}
\label{sec:intro}
Pen-and-paper games have not only stimulated bored high school students for centuries, but also attracted the attention of mathematicians and computer scientists alike. From Tic-Tac-Toe to Conway's \emph{Sprouts}~\cite{Gardner1970}, passing through \emph{Dots and Boxes}~\cite{buchin2021dots}, Sudoku, Hangman~\cite{barbay_et_al:LIPIcs.FUN.2021.23}, and Nim~\cite{boutonNimGameComplete1901}, simple pen-and-paper games have had a long lasting impact in combinatorial game theory~(e.g., the Sprague-Grundy theorem) and have offered landmark computational challenges (e.g., Sudokus require 17 clues to have a unique solution~\cite{mcguire201316clue}). 
In this paper we introduce a new pen-and-paper game,~\packit, and explore both mathematical and computational challenges concerning it.

\subsection{Definition of~\packit}

The game proceeds by turns, and takes place over an $n \times n$ grid that we shall denote $G$. The main principle of~\packit~ is very simple: on turn $t$ (starting from $1$), a rectangle $r_t$ of area $t$ or $t+1$ must be placed into $G$ without intersecting any of the already placed rectangles. Formally, at the beginning of the game one defines the set of \emph{used cells of the grid} as $U_0 := \varnothing$. On turn $t$, the corresponding player chooses $r_t = (h_t, v_t, x_t, y_t)$, with $h_t \cdot v_t \in \{t, t+1\}$, and $0 \leq x_t, y_t < n$. Define the cells used by this rectangle as the set
\[
A_t := \{ x_t, x_{t+1}, \ldots, x_{t+h_t-1}\} \times \{ y_t, y_{t+1}, \ldots, y_{t + v_t -1}\},
\]
so that the requirement for a valid turn is that $A_t \cap U_{t-1} = \varnothing$. After a valid turn, one sets $U_t := U_{t-1} \cup A_t$.~\Cref{fig:example-game} illustrates some examples.

\begin{figure}[t]
    \centering
    \begin{subfigure}{0.48\textwidth}
    \centering
   \begin{tikzpicture}
\onesquare{ 0.0 }{ 3.75 }{\cola}{\numa}
\onesquare{ 0.75 }{ 3.75 }{\colb}{\numb}
\onesquare{ 1.5 }{ 3.75 }{\colc}{\numc}
\onesquare{ 2.25 }{ 3.75 }{\cold}{\numd}
\onesquare{ 3.0 }{ 3.75 }{\cole}{\nume}
\onesquare{ 0.0 }{ 3.0 }{\cola}{\numa}
\onesquare{ 0.75 }{ 3.0 }{\colb}{\numb}
\onesquare{ 1.5 }{ 3.0 }{\colc}{\numc}
\onesquare{ 2.25 }{ 3.0 }{\cold}{\numd}
\onesquare{ 3.0 }{ 3.0 }{\cole}{\nume}
\onesquare{ 0.0 }{ 2.25 }{\colGG}{\;}
\onesquare{ 0.75 }{ 2.25 }{\colb}{\numb}
\onesquare{ 1.5 }{ 2.25 }{\colc}{\numc}
\onesquare{ 2.25 }{ 2.25 }{\cold}{\numd}
\onesquare{ 3.0 }{ 2.25 }{\cole}{\nume}
\onesquare{ 0.0 }{ 1.5 }{\colf}{\numf}
\onesquare{ 0.75 }{ 1.5 }{\colf}{\numf}
\onesquare{ 1.5 }{ 1.5 }{\colf}{\numf}
\onesquare{ 2.25 }{ 1.5 }{\cold}{\numd}
\onesquare{ 3.0 }{ 1.5 }{\cole}{\nume}
\onesquare{ 0.0 }{ 0.75 }{\colf}{\numf}
\onesquare{ 0.75 }{ 0.75 }{\colf}{\numf}
\onesquare{ 1.5 }{ 0.75 }{\colf}{\numf}
\onesquare{ 2.25 }{ 0.75 }{\cold}{\numd}
\onesquare{ 3.0 }{ 0.75 }{\cole}{\nume}
   \end{tikzpicture}
   \caption{An \emph{imperfect} game of~\packit }
   \end{subfigure}
    \begin{subfigure}{0.48\textwidth}
    \centering
   \begin{tikzpicture}[]
\onesquare{ 0.0 }{ 3.75 }{\cola}{\numa}
\onesquare{ 0.75 }{ 3.75 }{\cola}{\numa}
\onesquare{ 1.5 }{ 3.75 }{\colb}{\numb}
\onesquare{ 2.25 }{ 3.75 }{\cold}{\numd}
\onesquare{ 3.0 }{ 3.75 }{\cole}{\nume}
\onesquare{ 0.0 }{ 3.0 }{\colc}{\numc}
\onesquare{ 0.75 }{ 3.0 }{\colc}{\numc}
\onesquare{ 1.5 }{ 3.0 }{\colb}{\numb}
\onesquare{ 2.25 }{ 3.0 }{\cold}{\numd}
\onesquare{ 3.0 }{ 3.0 }{\cole}{\nume}
\onesquare{ 0.0 }{ 2.25 }{\colc}{\numc}
\onesquare{ 0.75 }{ 2.25 }{\colc}{\numc}
\onesquare{ 1.5 }{ 2.25 }{\colb}{\numb}
\onesquare{ 2.25 }{ 2.25 }{\cold}{\numd}
\onesquare{ 3.0 }{ 2.25 }{\cole}{\nume}
\onesquare{ 0.0 }{ 1.5 }{\colf}{\numf}
\onesquare{ 0.75 }{ 1.5 }{\colf}{\numf}
\onesquare{ 1.5 }{ 1.5 }{\colf}{\numf}
\onesquare{ 2.25 }{ 1.5 }{\cold}{\numd}
\onesquare{ 3.0 }{ 1.5 }{\cole}{\nume}
\onesquare{ 0.0 }{ 0.75 }{\colf}{\numf}
\onesquare{ 0.75 }{ 0.75 }{\colf}{\numf}
\onesquare{ 1.5 }{ 0.75 }{\colf}{\numf}
\onesquare{ 2.25 }{ 0.75 }{\cold}{\numd}
\onesquare{ 3.0 }{ 0.75 }{\cole}{\nume}

   \end{tikzpicture}
   \caption{A \emph{perfect} game of~\packit }
   \end{subfigure}
    \caption{Illustration of a couple of games of~\packit. Each rectangle $a_t$ is labeled with $t$ and depicted in a different color.}
    \label{fig:example-game}
\end{figure}

\subparagraph*{\textbf{\packit~as a game}.}
\packit~can be played as a \emph{solitaire} game, where the goal of the game is to complete a \emph{perfect packing}, that is, to play so that after a valid sequence of turns it holds that $U_t = \{0, \ldots, n-1 \} \times \{0, \ldots, n-1\}$. As depicted in~\Cref{fig:example-game}, we say the final board of such a game corresponds to a \emph{perfect game of \packit}. For two players, it suffices to alternate turns and when a player cannot play a valid turn, he or she is declared the \emph{loser}.
At this point, we suggest the reader to directly experiment with~\packit. A version of the game is available for \emph{solitaire} mode at \href{https://packit.surge.sh}{https://packit.surge.sh}.\\

\subparagraph*{\textbf{Organization}}
The main question about~\packit~is:
\begin{center}
    \emph{for which values of $n$ the $n\times n$ grid admits a perfect game of~\packit?}
\end{center}
\Cref{sec:results}~presents arithmetic results that represent the initial steps toward answering this question. Then,~\Cref{sec:complexity}~discusses the complexity of~\packit, showing that a particular version of the solitaire game is NP-hard.\Cref{sec:computation} is devoted to the problem of computationally generating perfect games of~\packit. We present a novel SAT encoding that allows us to obtain solutions up to $n \leq 50$, thereby providing partial evidence for the main conjecture presented in~\Cref{sec:results}.

\section{Arithmetic Results}
\label{sec:results}
A perfect game of~\packit~can be conceptually divided into two aspects: 
\begin{itemize}
    \item \textbf{(Rectangle selection)} As we denote by $|A_t|$ the area of the rectangle used in turn $t$, it must hold that in a perfect game of~\packit~we have
    \[
    \sum_{t} |A_t| = n^2.
    \]
    Moreover, in order to fit every rectangle $r_t$ of dimensions $h_t \times v_t$, it must hold that  $\max(h_t, v_t) \leq n$.
    We will say that such a sequence of choices is \emph{a valid rectangle sequence}. 
    \item \textbf{(Packing Aspect)} Even if a sequence of area choices is valid, it can be the case that it is not possible to use such area choices in a perfect game of~\packit. 
\end{itemize}

This section focuses on studying perfect games through the lens of the first aspect, as it is sometimes enough to determine the \emph{tileability}/\emph{untileability} of grids. 
Despite~\packit~being originally defined for a square grid, from now on we consider $m \times n$ grids as most of our ideas generalize nicely in that setting. Without loss of generality we will assume $n \geq m$ throughout the paper.

In order to state our results, we will need a couple of definitions. We denote by $T_k$ the $k$-{th} \emph{triangle number}, defined as $T_k = \sum_{i=1}^k i = \frac{k(k+1)}{2}$.  Then, for any positive integer $r$, we denote by $\tau(r) = \argmax_{k}\{ T_k \mid T_k \leq r \}$.

An initial observation to understand whether an $m \times n$ grid admits a perfect packing is that the number of rectangles used in perfect ~\packit~games depends entirely on the grid area $m \cdot n$,  and not on its precise width or height

\begin{lemma}
For an $m \times n$ grid there is a unique number $K(m, n)$ such that if the $m \times n$ grid admits a perfect~\packit~game, then such a packing must use exactly $K(m, n)$ rectangles. In particular, $K(m, n) = \tau(m \cdot n).$ 
\label{lemma:K-def}
\end{lemma}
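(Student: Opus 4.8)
The plan is to exploit the flexibility granted by the rule that on turn $t$ one may place a rectangle of area $t$ \emph{or} $t+1$. First I would observe that after $k$ turns the total area placed is $\sum_{t=1}^k |A_t|$, where each term $|A_t| \in \{t, t+1\}$; hence the possible cumulative areas after $k$ turns form exactly the integer interval $[T_k, T_k + k]$, since we may independently choose to add the extra unit on any subset of the $k$ turns. Thus an $m \times n$ grid can be perfectly packed \emph{only if} its area $N := m\cdot n$ lies in $[T_k, T_k+k]$ for the value of $k$ equal to the number of rectangles used; and conversely the area condition alone forces $k$ to be determined by $N$.

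The key step is to show that the intervals $[T_k, T_k + k]$ and $[T_{k+1}, T_{k+1}+(k+1)]$ meet, so that every positive integer $N$ lies in exactly the right one. Here $T_{k+1} = T_k + (k+1)$, so $[T_k, T_k+k]$ has right endpoint $T_k+k$ and the next interval has left endpoint $T_k + k + 1$; these are consecutive integers, so the union $\bigcup_{k\ge 1}[T_k, T_k+k]$ is all of $\mathbb{Z}_{\ge 1}$, and — crucially — the intervals are \emph{disjoint} (they abut but do not overlap). Therefore for each $N \ge 1$ there is a \emph{unique} $k$ with $T_k \le N \le T_k + k$; by the definition of $\tau$ given in the excerpt, $T_k \le N$ is equivalent to $k \le \tau(N)$, and the upper bound $N \le T_k + k = T_{k+1} - 1 < T_{k+1}$ pins down $k = \tau(N)$ exactly. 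I would then set $K(m,n) := \tau(m\cdot n)$ and conclude: any perfect \packit~game on the $m\times n$ grid uses $k$ rectangles for some $k$, its total area is $N$, so $N \in [T_k, T_k+k]$, forcing $k = \tau(N) = K(m,n)$.

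I do not anticipate a genuine obstacle; the only thing to be careful about is the boundary bookkeeping — checking $T_{k+1} = T_k + (k+1)$ so that the intervals tile $\mathbb{Z}_{\ge1}$ without gaps or overlaps, and confirming the edge case $k=1$ (areas $1$ or $2$, interval $[1,2]$, and indeed $\tau(1)=\tau(2)=1$). One should also note that the lemma only claims a \emph{necessary} condition on the rectangle count, so no packing actually needs to be exhibited; the argument is purely arithmetic about which areas are reachable, and the uniqueness of $K(m,n)$ is precisely the uniqueness of the interval containing $N$.
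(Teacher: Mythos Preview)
Your proposal is correct and follows essentially the same argument as the paper: both rest on the observation that a perfect game with $k$ turns must have total area in $[T_k,\,T_{k+1}-1]$, and that these intervals are pairwise disjoint, forcing $k=\tau(m\cdot n)$. The paper phrases this as a proof by contradiction (assuming two distinct values $K_1\neq K_2$ and deriving $T_{K_1}\le m\cdot n\le T_{K_1}-1$), whereas you give the cleaner direct version by noting that the intervals $[T_k,T_k+k]$ tile $\mathbb{Z}_{\ge1}$; but the mathematical content is identical.
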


\begin{proof}
    Assume, expecting a contradiction that for some $m \times n$ grid there are two sequences $A := (|A_1|, \ldots, |A_{K_1}|)$ and $A' := (|A'_1|, \ldots, |A'_{K_2}|)$, with $K_1 \neq K_2$, that can be used for perfect packings. Now, note that we must have
    \begin{equation}\label{eq:1}
    \sum_{t=1}^{K_1} |A_t| = m \cdot n =     \sum_{t=1}^{K_2} |A'_t|.
    \end{equation}
    By the game rules, we have that
    \[\sum_{t=1}^{K_1} |A_t| \geq \sum_{t=1}^{K_1} t =  T_{K_1}, \; \quad \text{ and }\quad \; \sum_{t=1}^{K_1} |A_t|\leq  \sum_{t=1}^{K_1} (t+1) = T_{K_1 +1} - 1.\] 
    Using the same analysis for $A'$, and~\Cref{eq:1}, we get
    \[
     \max(T_{K_1}, T_{K_2}) \leq m \cdot n \leq \min(T_{K_1 + 1}, T_{K_2 + 1})- 1. 
    \]
    As $K_1 \neq K_2$, let us assume without loss of generality that $K_1 > K_2$. Using that $T$ is an increasing sequence, we have
    \begin{equation}\label{eq:2}
         T_{K_1} \leq m \cdot n \leq T_{K_2 + 1} - 1. 
    \end{equation}
    Now, as $K_1$ is an integer, $K_1 > K_2$ implies $K_1 \geq K_2 + 1$, from where~\Cref{eq:2} becomes
    \(
          T_{K_1} \leq m \cdot n \leq T_{K_1} - 1, 
    \) a clear contradiction. To obtain the second part of the lemma, note that when $K(m, n) := K_1 = K_2$ we get 
    \[
    T_{K(m, n)} \leq m \cdot n \leq T_{K(m, n) + 1} - 1,
    \]
    from where it follows by the definition of $\tau$ that $K(m, n) = \tau(m \cdot n)$.
\end{proof}

We can now define the notion of \emph{gap}, which intuitively represents the number of turns $t$ in which a rectangle of area $t+1$ must be chosen. 
Let us say that any turn $t$ at which a rectangle of area $t+1$ is chosen is an \emph{expansion turn}. 

\begin{definition}
    For any $m \times n$ grid, we define its \emph{gap}, $\gamma(m, n)$, as
    \[
    \gamma(m, n) = m \cdot n - T_{\tau(m \cdot n)}.
    \]
\end{definition}

\begin{lemma}
    For any sequence of turns that results in a perfect packing of an $m \times n$ grid, the number of expansion turns is exactly $\gamma(m, n)$.
    \label{lemma:gap-def}
\end{lemma}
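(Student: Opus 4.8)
The plan is to extract everything from a single area count, using~\Cref{lemma:K-def} to pin down how many turns a perfect packing has. First I would invoke~\Cref{lemma:K-def}: any perfect packing of the $m \times n$ grid uses exactly $K := K(m,n) = \tau(m \cdot n)$ rectangles, so the turns played are precisely $t = 1, 2, \ldots, K$ (the game rules force turn $t$ to be played right after turn $t-1$, so there is no skipping). For each such $t$ we have $|A_t| = t$ if $t$ is not an expansion turn and $|A_t| = t+1$ if it is; writing $E$ for the number of expansion turns, this gives
\[
\sum_{t=1}^{K} |A_t| \;=\; \sum_{t=1}^{K} t \;+\; E \;=\; T_K + E.
\]

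Next I would use that the packing is perfect, which by definition means the rectangles exactly tile the grid, i.e. $\sum_{t=1}^{K} |A_t| = m \cdot n$. Combining this with the displayed identity yields $E = m\cdot n - T_K = m \cdot n - T_{\tau(m \cdot n)}$, which is exactly $\gamma(m, n)$ by the preceding definition. This completes the argument.

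There is essentially no obstacle here: the statement is a direct consequence of conservation of area together with~\Cref{lemma:K-def}. The only point worth stating explicitly is that the set of turns in a perfect game is $\{1, \ldots, K\}$ with no gaps, so that $\sum_{t=1}^{K} t = T_K$ is the correct ``baseline'' area; this is immediate from the turn structure of the game. I would keep the write-up to a few lines.
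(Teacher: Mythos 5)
Your proposal is correct and follows the paper's proof exactly: both use \Cref{lemma:K-def} to fix the number of turns at $K(m,n)=\tau(m\cdot n)$, note that the baseline area $T_{\tau(m\cdot n)}$ is exceeded by exactly one unit per expansion turn, and conclude from $\sum_t |A_t| = m\cdot n$ that the number of expansions is $\gamma(m,n)$. No meaningful difference.
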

\begin{proof}
    By~\Cref{lemma:K-def}, there must be exactly $K(m, n) = \tau(m \cdot n)$ turns in such a sequence. If for every turn $t \in \{1, \ldots, \tau(m \cdot n)\}$, a rectangle of area $t$ were to be chosen, then the total area used would be exactly 
    \[
    \sum_{t=1}^{\tau(m \cdot n)} t = T_{\tau(m \cdot n)}.
    \]
    Given that the total area used must be $m \cdot n$, we conclude there must be exactly $m \cdot n - T_{\tau(m \cdot n)}$ expansion turns.
\end{proof}

The next ingredient to analyze whether an $m \times n$ grid admits a perfect packing has to do with prime numbers, as if the area of a rectangle is a prime number $p$, then the only possibles rectangles are $p \times 1$ or $1 \times p$, which can limit our ability to pack it. We define the set $P(m, n)$ as
\[
    P(m, n) = 
    \{ p \mid n < p \leq K(m, n)  \text { and } p \text{ is prime}  \}.
\]

As the next results show, the comparison between the \emph{gap} of a grid and the size of its corresponding $P$ set plays a crucial role in understanding whether or not it allows a perfect packing. In particular,~\Cref{thm:small-gap} shows how small gaps can forbid perfect packings, whereas~\Cref{thm:large-gap} shows how large gaps can also be problematic.


\begin{theorem}[Small gap]
    For any $m \times n$ grid, if $\gamma(m, n) < |P(m, n)|$, then the grid does not allow a perfect game of~\packit.
    \label{thm:small-gap}
\end{theorem}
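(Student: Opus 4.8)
The plan is to prove the contrapositive in its sharpest form: in \emph{any} perfect game of~\packit~on an $m \times n$ grid, the number of expansion turns is at least $|P(m,n)|$. Combined with~\Cref{lemma:gap-def}, which says that this number equals $\gamma(m,n)$, this yields $\gamma(m,n) \ge |P(m,n)|$, so the hypothesis $\gamma(m,n) < |P(m,n)|$ rules out a perfect game.

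First I would isolate the elementary geometric fact that drives the argument: if $p$ is a prime with $p > n$, then no rectangle of area exactly $p$ can be placed in the grid. Indeed, the only rectangles of area $p$ are $1 \times p$ and $p \times 1$, and placing either requires a side of length $p$; but both sides of the grid have length at most $n < p$ (recall the standing assumption $n \ge m$). Hence such a rectangle cannot fit regardless of position.

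Next, fix a hypothetical perfect game and let $p \in P(m,n)$. By~\Cref{lemma:K-def} the game consists of exactly $K(m,n) = \tau(m \cdot n)$ turns, and since by definition $p \le K(m,n)$, turn $p$ genuinely occurs in the game. On turn $p$ the player must place a rectangle of area $p$ or $p+1$; the previous paragraph shows area $p$ is impossible, so the rectangle placed on turn $p$ has area $p+1$, i.e.\ turn $p$ is an expansion turn. Since distinct primes $p$ correspond to distinct turn indices $p$, we obtain at least $|P(m,n)|$ distinct expansion turns in the game.

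Finally I would invoke~\Cref{lemma:gap-def} once more: in a perfect packing the total number of expansion turns is exactly $\gamma(m,n)$, so $\gamma(m,n) \ge |P(m,n)|$, and the theorem follows by contraposition. I do not expect a genuine obstacle; the only points needing a word of care are (i) that the geometric impossibility uses $p > n$ together with $n \ge m$, so that \emph{neither} side of the grid can accommodate a side of length $p$, and (ii) that turn $p$ is actually played in a perfect game, which is precisely what the bound $p \le K(m,n)$ in the definition of $P(m,n)$ guarantees via~\Cref{lemma:K-def}. Note we never need $p+1$ itself to be placeable: in the assumed perfect game turn $p$ does place \emph{some} rectangle, and the only option left is area $p+1$.
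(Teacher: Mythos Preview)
Your proof is correct and follows essentially the same approach as the paper: for each prime $p \in P(m,n)$, the turn $p$ must be an expansion turn because a $1\times p$ or $p\times 1$ rectangle cannot fit, and then \Cref{lemma:gap-def} forces $\gamma(m,n)\ge |P(m,n)|$. Your write-up is in fact a bit more careful than the paper's in explicitly checking that turn $p$ is actually played (via $p\le K(m,n)$) and that distinct primes yield distinct expansion turns.
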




\begin{figure}[ht]
    \centering
   \begin{tikzpicture}
\onesquare{ 0.0 }{ 4.5 }{\cola}{\numa}
\onesquare{ 0.75 }{ 4.5 }{\colb}{\numb}
\onesquare{ 1.5 }{ 4.5 }{\colb}{\numb}
\onesquare{ 2.25 }{ 4.5 }{\colc}{\numc}
\onesquare{ 3.0 }{ 4.5 }{\colc}{\numc}
\onesquare{ 3.75 }{ 4.5 }{\colc}{\numc}
\onesquare{ 0.0 }{ 3.75 }{\cold}{\numd}
\onesquare{ 0.75 }{ 3.75 }{\cold}{\numd}
\onesquare{ 1.5 }{ 3.75 }{\colg}{\numg}
\onesquare{ 2.25 }{ 3.75 }{\colg}{\numg}
\onesquare{ 3.0 }{ 3.75 }{\colGG}{\;}
\onesquare{ 3.75 }{ 3.75 }{\cole}{\nume}
\onesquare{ 0.0 }{ 3.0 }{\cold}{\numd}
\onesquare{ 0.75 }{ 3.0 }{\cold}{\numd}
\onesquare{ 1.5 }{ 3.0 }{\colg}{\numg}
\onesquare{ 2.25 }{ 3.0 }{\colg}{\numg}
\onesquare{ 3.0 }{ 3.0 }{\colGG}{\;}
\onesquare{ 3.75 }{ 3.0 }{\cole}{\nume}
\onesquare{ 0.0 }{ 2.25 }{\colf}{\numf}
\onesquare{ 0.75 }{ 2.25 }{\colf}{\numf}
\onesquare{ 1.5 }{ 2.25 }{\colg}{\numg}
\onesquare{ 2.25 }{ 2.25 }{\colg}{\numg}
\onesquare{ 3.0 }{ 2.25 }{\colGG}{\;}
\onesquare{ 3.75 }{ 2.25 }{\cole}{\nume}
\onesquare{ 0.0 }{ 1.5 }{\colf}{\numf}
\onesquare{ 0.75 }{ 1.5 }{\colf}{\numf}
\onesquare{ 1.5 }{ 1.5 }{\colg}{\numg}
\onesquare{ 2.25 }{ 1.5 }{\colg}{\numg}
\onesquare{ 3.0 }{ 1.5 }{\colGG}{\;}
\onesquare{ 3.75 }{ 1.5 }{\cole}{\nume}
\onesquare{ 0.0 }{ 0.75 }{\colf}{\numf}
\onesquare{ 0.75 }{ 0.75 }{\colf}{\numf}
\onesquare{ 1.5 }{ 0.75 }{\colGG}{\;}
\onesquare{ 2.25 }{ 0.75 }{\colGG}{\;}
\onesquare{ 3.0 }{ 0.75 }{\colGG}{\;}
\onesquare{ 3.75 }{ 0.75 }{\cole}{\nume}
   \end{tikzpicture}
    \caption{Illustration of the impossibility result for $n=6$ resulting from~\Cref{thm:small-gap}. Even though turns $1$ through $6$ use the minimal possible area, the choice of area $8$ on turn $7$ is enough to make turn $9$ possible, as only $8$ empty cells remain (which is invariant under the concrete choice of packing).}
    \label{fig:small-gap}
\end{figure}

Before a formal proof, let us present some intuition. \Cref{thm:small-gap} considers a gap that is \emph{``too small''}, as the following example shows. Consider $m = n = 6$. One can easily check that, $\tau(6 \cdot 6) = 8$\footnote{A general formula for $\tau(r)$ is not too hard to derive. In particular, $\tau(r) = \left\lfloor\frac{\sqrt{8r + 1}}{2} - \frac{1}{2}\right\rfloor$.}, and therefore the gap results in
\[\gamma(m, n) = m \cdot n - T_{\tau(m \cdot n)} = 6 \cdot 6 - \frac{8 \cdot 9}{2} = 0.\] Then, $K(m, n) = \tau(m \cdot n) = 8$, and thus $P(m, n) = \{7\}$. As $K(m, n) = 8$, any perfect packing of the $6 \times 6$ grid will consist of 8 rectangles. We claim that in turn $7$, the area chosen must be $7$, or in other words, that choosing a rectangle of area $8$ in turn $7$ would forbid a perfect packing. Too see this, consider expecting a contradiction that a rectangle of area $8$ is chosen on turn $7$, and notice that then on the first $8$ turns the smallest sum of areas we can achieve would be \[1 + 2 + 3 + 4 + 5 + 6 + 8 + 8 = 37 > 36,\] a contradiction. On the other hand, given $7$ is a prime number, the only rectangles of area $7$ are a $1 \times 7$ or  a $7 \times 1$ rectangle, neither of which can be packed into a $6 \times 6$ grid. As either area choice for turn $7$ leads to a contradiction, we conclude it is not possible to have a perfect game of~\packit~over the $6 \times 6$ grid.
This example is illustrated in~\Cref{fig:small-gap}, and is generalized in the next proof.
\begin{proof}[Proof of~\Cref{thm:small-gap}]
 
    Let $p \in P(m, n)$. At turn $p$, one must choose between area $p$ or area $p+1$. If area $p$ is chosen, then the rectangle must be either $1 \times p$ or $p \times 1$, due to the primality of $p$. However, by the definition of the set $P(m, n)$ we have $p > n \geq m$, and thus neither the $1 \times p$ nor the $p \times 1$ rectangle can be packed into the $m \times n$ grid. 
    Assume, expecting a contradiction, that $\gamma(m, n) < |P(m,n)|$ and there exists a sequence of turns leading to a perfect packing for the $m \times n$ grid. As a result of the previous argument, every turn $p \in P(m, n)$ must be an expansion turn. As the number of expansion turns is equal to $\gamma(m, n)$ by~\Cref{lemma:gap-def}, we have $\gamma(m, n) \geq |P(m, n)|$, which directly contradicts the assumption.

\end{proof}

\begin{theorem}[Large gap]
    For any $m \times n$ grid, let $\mathbf{1}_{K_p}$ be the indicator variable corresponding to whether
    $K(m, n)+1$ is a prime number or not. Then, the condition
    \[
    \gamma(m, n)   > K(m, n) - |P(m, n)| - \mathbf{1}_{K_p}
    \] 
    implies the $m \times n$ grid does not allow a perfect game of~\packit.
    \label{thm:large-gap}
\end{theorem}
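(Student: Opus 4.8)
The plan is to prove the contrapositive: assuming the $m \times n$ grid admits a perfect game of \packit, I will show $\gamma(m,n) \le K(m,n) - |P(m,n)| - \mathbf{1}_{K_p}$. The argument is the natural ``dual'' of the one behind \Cref{thm:small-gap}. There, the primes in $P(m,n)$ force certain turns to be expansion turns; here, sufficiently large primes \emph{forbid} certain turns from being expansion turns, and counting the forbidden turns bounds $\gamma(m,n)$ from above.

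First I would fix a perfect packing and recall, via \Cref{lemma:K-def} and \Cref{lemma:gap-def}, that it uses exactly $K := K(m,n)$ turns, of which exactly $\gamma := \gamma(m,n)$ are expansion turns. Then I would isolate the local obstruction: if turn $t$ is an expansion turn it places a rectangle of area $t+1$, and if $t+1$ is a prime $q$ with $q > n \geq m$, then the only rectangles of area $q$ are $1 \times q$ and $q \times 1$, neither of which fits in an $m \times n$ grid. Hence, setting $R := \{\, t \in \{1, \ldots, K\} : t+1 \text{ is prime and } t+1 > n \,\}$, the set $E$ of expansion turns must satisfy $E \cap R = \varnothing$, so $\gamma = |E| \le K - |R|$.

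It then remains to evaluate $|R|$. The map $t \mapsto t+1$ is a bijection from $R$ onto $Q := \{\, q \text{ prime} : n < q \le K+1 \,\}$, so $|R| = |Q|$. Splitting off the top value: the primes in $Q$ that do not exceed $K$ are precisely $P(m,n)$, while $K+1 \in Q$ exactly when $K+1$ is prime \emph{and} $K+1 > n$. Therefore $|R| = |P(m,n)| + \mathbf{1}[\,K+1 \text{ prime and } K+1 > n\,]$, and combining with the previous paragraph gives $\gamma(m,n) \le K(m,n) - |P(m,n)| - \mathbf{1}[\,K+1\text{ prime and }K+1>n\,]$, which is the negation of the hypothesis as soon as this indicator coincides with $\mathbf{1}_{K_p}$.

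The only delicate point — and what I would treat as the main obstacle — is precisely that coincidence. As defined, $\mathbf{1}_{K_p}$ records only whether $K(m,n)+1$ is prime, whereas the counting genuinely produces ``$K(m,n)+1$ is prime and $K(m,n)+1 > n$''. These agree exactly when $K(m,n) + 1 > n$, i.e.\ (unwinding the definition of $\tau$) when $T_n \le m\cdot n$, equivalently $2m > n$; this is automatic for every square grid since $T_n = n(n+1)/2 \le n^2$, so the theorem holds there without caveat. For highly elongated grids ($n \ge 2m$) one instead has $K+1 \le n$ and $P(m,n) = \varnothing$, so the right-hand side collapses to $K(m,n)$ and the bound $\gamma \le K$ is vacuous; in that case I would either restrict the statement to the regime $2m > n$ or carry the extra conjunct in $\mathbf{1}_{K_p}$. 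Apart from this bookkeeping at the top turn $t = K$, the proof is a short counting argument parallel to \Cref{thm:small-gap}.
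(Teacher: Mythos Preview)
Your approach is essentially the same as the paper's: both argue that turns $t$ with $t+1$ a prime exceeding $n$ cannot be expansion turns, count these, and bound $\gamma(m,n)$ from above by $K(m,n)$ minus that count. Your scrutiny of whether $K(m,n)+1 > n$ is in fact more careful than the paper, which simply asserts ``if $K(m,n)+1$ is prime, then turn $K(m,n)$ cannot be an expansion turn'' without checking that $K(m,n)+1 > n$; your observation that this needs $2m > n$ (automatic for squares) or else the refined indicator is a genuine sharpening of the paper's argument.
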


\begin{figure}[t]
    \centering
    \begin{tikzpicture}
\onesquareTwo{ 0.0 }{ 10.799999999999999 }{\cole}{\nume}
\onesquareTwo{ 0.6 }{ 10.799999999999999 }{\cole}{\nume}
\onesquareTwo{ 1.2 }{ 10.799999999999999 }{\cole}{\nume}
\onesquareTwo{ 1.7999999999999998 }{ 10.799999999999999 }{\cole}{\nume}
\onesquareTwo{ 2.4 }{ 10.799999999999999 }{\cole}{\nume}
\onesquareTwo{ 3.0 }{ 10.799999999999999 }{\cole}{\nume}
\onesquareTwo{ 3.5999999999999996 }{ 10.799999999999999 }{\colw}{\numw}
\onesquareTwo{ 4.2 }{ 10.799999999999999 }{\colw}{\numw}
\onesquareTwo{ 4.8 }{ 10.799999999999999 }{\colw}{\numw}
\onesquareTwo{ 5.3999999999999995 }{ 10.799999999999999 }{\colw}{\numw}
\onesquareTwo{ 6.0 }{ 10.799999999999999 }{\colw}{\numw}
\onesquareTwo{ 6.6 }{ 10.799999999999999 }{\colw}{\numw}
\onesquareTwo{ 7.199999999999999 }{ 10.799999999999999 }{\colw}{\numw}
\onesquareTwo{ 7.8 }{ 10.799999999999999 }{\colw}{\numw}
\onesquareTwo{ 8.4 }{ 10.799999999999999 }{\colw}{\numw}
\onesquareTwo{ 9.0 }{ 10.799999999999999 }{\colw}{\numw}
\onesquareTwo{ 9.6 }{ 10.799999999999999 }{\colw}{\numw}
\onesquareTwo{ 10.2 }{ 10.799999999999999 }{\colw}{\numw}
\onesquareTwo{ 0.0 }{ 10.2 }{\coln}{\numn}
\onesquareTwo{ 0.6 }{ 10.2 }{\cold}{\numd}
\onesquareTwo{ 1.2 }{ 10.2 }{\cold}{\numd}
\onesquareTwo{ 1.7999999999999998 }{ 10.2 }{\cold}{\numd}
\onesquareTwo{ 2.4 }{ 10.2 }{\cold}{\numd}
\onesquareTwo{ 3.0 }{ 10.2 }{\cold}{\numd}
\onesquareTwo{ 3.5999999999999996 }{ 10.2 }{\colw}{\numw}
\onesquareTwo{ 4.2 }{ 10.2 }{\colw}{\numw}
\onesquareTwo{ 4.8 }{ 10.2 }{\colw}{\numw}
\onesquareTwo{ 5.3999999999999995 }{ 10.2 }{\colw}{\numw}
\onesquareTwo{ 6.0 }{ 10.2 }{\colw}{\numw}
\onesquareTwo{ 6.6 }{ 10.2 }{\colw}{\numw}
\onesquareTwo{ 7.199999999999999 }{ 10.2 }{\colw}{\numw}
\onesquareTwo{ 7.8 }{ 10.2 }{\colw}{\numw}
\onesquareTwo{ 8.4 }{ 10.2 }{\colw}{\numw}
\onesquareTwo{ 9.0 }{ 10.2 }{\colw}{\numw}
\onesquareTwo{ 9.6 }{ 10.2 }{\colw}{\numw}
\onesquareTwo{ 10.2 }{ 10.2 }{\colw}{\numw}
\onesquareTwo{ 0.0 }{ 9.6 }{\coln}{\numn}
\onesquareTwo{ 0.6 }{ 9.6 }{\colp}{\nump}
\onesquareTwo{ 1.2 }{ 9.6 }{\colp}{\nump}
\onesquareTwo{ 1.7999999999999998 }{ 9.6 }{\colp}{\nump}
\onesquareTwo{ 2.4 }{ 9.6 }{\colp}{\nump}
\onesquareTwo{ 3.0 }{ 9.6 }{\colp}{\nump}
\onesquareTwo{ 3.5999999999999996 }{ 9.6 }{\colp}{\nump}
\onesquareTwo{ 4.2 }{ 9.6 }{\colp}{\nump}
\onesquareTwo{ 4.8 }{ 9.6 }{\colp}{\nump}
\onesquareTwo{ 5.3999999999999995 }{ 9.6 }{\colp}{\nump}
\onesquareTwo{ 6.0 }{ 9.6 }{\colp}{\nump}
\onesquareTwo{ 6.6 }{ 9.6 }{\colp}{\nump}
\onesquareTwo{ 7.199999999999999 }{ 9.6 }{\colp}{\nump}
\onesquareTwo{ 7.8 }{ 9.6 }{\colp}{\nump}
\onesquareTwo{ 8.4 }{ 9.6 }{\colp}{\nump}
\onesquareTwo{ 9.0 }{ 9.6 }{\colp}{\nump}
\onesquareTwo{ 9.6 }{ 9.6 }{\colp}{\nump}
\onesquareTwo{ 10.2 }{ 9.6 }{\colp}{\nump}
\onesquareTwo{ 0.0 }{ 9.0 }{\coln}{\numn}
\onesquareTwo{ 0.6 }{ 9.0 }{\coll}{\numl}
\onesquareTwo{ 1.2 }{ 9.0 }{\colGG}{\;}
\onesquareTwo{ 1.7999999999999998 }{ 9.0 }{\colk}{\numk}
\onesquareTwo{ 2.4 }{ 9.0 }{\colk}{\numk}
\onesquareTwo{ 3.0 }{ 9.0 }{\colk}{\numk}
\onesquareTwo{ 3.5999999999999996 }{ 9.0 }{\colk}{\numk}
\onesquareTwo{ 4.2 }{ 9.0 }{\colk}{\numk}
\onesquareTwo{ 4.8 }{ 9.0 }{\colk}{\numk}
\onesquareTwo{ 5.3999999999999995 }{ 9.0 }{\colk}{\numk}
\onesquareTwo{ 6.0 }{ 9.0 }{\colk}{\numk}
\onesquareTwo{ 6.6 }{ 9.0 }{\colk}{\numk}
\onesquareTwo{ 7.199999999999999 }{ 9.0 }{\colk}{\numk}
\onesquareTwo{ 7.8 }{ 9.0 }{\colk}{\numk}
\onesquareTwo{ 8.4 }{ 9.0 }{\colk}{\numk}
\onesquareTwo{ 9.0 }{ 9.0 }{\colb}{\numb}
\onesquareTwo{ 9.6 }{ 9.0 }{\colb}{\numb}
\onesquareTwo{ 10.2 }{ 9.0 }{\colb}{\numb}
\onesquareTwo{ 0.0 }{ 8.4 }{\coln}{\numn}
\onesquareTwo{ 0.6 }{ 8.4 }{\coll}{\numl}
\onesquareTwo{ 1.2 }{ 8.4 }{\colGG}{\;}
\onesquareTwo{ 1.7999999999999998 }{ 8.4 }{\colg}{\numg}
\onesquareTwo{ 2.4 }{ 8.4 }{\colg}{\numg}
\onesquareTwo{ 3.0 }{ 8.4 }{\colg}{\numg}
\onesquareTwo{ 3.5999999999999996 }{ 8.4 }{\colg}{\numg}
\onesquareTwo{ 4.2 }{ 8.4 }{\colg}{\numg}
\onesquareTwo{ 4.8 }{ 8.4 }{\colg}{\numg}
\onesquareTwo{ 5.3999999999999995 }{ 8.4 }{\colg}{\numg}
\onesquareTwo{ 6.0 }{ 8.4 }{\colg}{\numg}
\onesquareTwo{ 6.6 }{ 8.4 }{\colx}{\numx}
\onesquareTwo{ 7.199999999999999 }{ 8.4 }{\colx}{\numx}
\onesquareTwo{ 7.8 }{ 8.4 }{\colx}{\numx}
\onesquareTwo{ 8.4 }{ 8.4 }{\colx}{\numx}
\onesquareTwo{ 9.0 }{ 8.4 }{\colx}{\numx}
\onesquareTwo{ 9.6 }{ 8.4 }{\cols}{\nums}
\onesquareTwo{ 10.2 }{ 8.4 }{\cols}{\nums}
\onesquareTwo{ 0.0 }{ 7.8 }{\coln}{\numn}
\onesquareTwo{ 0.6 }{ 7.8 }{\coll}{\numl}
\onesquareTwo{ 1.2 }{ 7.8 }{\colu}{\numu}
\onesquareTwo{ 1.7999999999999998 }{ 7.8 }{\colu}{\numu}
\onesquareTwo{ 2.4 }{ 7.8 }{\colj}{\numj}
\onesquareTwo{ 3.0 }{ 7.8 }{\colv}{\numv}
\onesquareTwo{ 3.5999999999999996 }{ 7.8 }{\colv}{\numv}
\onesquareTwo{ 4.2 }{ 7.8 }{\colt}{\numt}
\onesquareTwo{ 4.8 }{ 7.8 }{\colt}{\numt}
\onesquareTwo{ 5.3999999999999995 }{ 7.8 }{\colt}{\numt}
\onesquareTwo{ 6.0 }{ 7.8 }{\colh}{\numh}
\onesquareTwo{ 6.6 }{ 7.8 }{\colx}{\numx}
\onesquareTwo{ 7.199999999999999 }{ 7.8 }{\colx}{\numx}
\onesquareTwo{ 7.8 }{ 7.8 }{\colx}{\numx}
\onesquareTwo{ 8.4 }{ 7.8 }{\colx}{\numx}
\onesquareTwo{ 9.0 }{ 7.8 }{\colx}{\numx}
\onesquareTwo{ 9.6 }{ 7.8 }{\cols}{\nums}
\onesquareTwo{ 10.2 }{ 7.8 }{\cols}{\nums}
\onesquareTwo{ 0.0 }{ 7.199999999999999 }{\coln}{\numn}
\onesquareTwo{ 0.6 }{ 7.199999999999999 }{\coll}{\numl}
\onesquareTwo{ 1.2 }{ 7.199999999999999 }{\colu}{\numu}
\onesquareTwo{ 1.7999999999999998 }{ 7.199999999999999 }{\colu}{\numu}
\onesquareTwo{ 2.4 }{ 7.199999999999999 }{\colj}{\numj}
\onesquareTwo{ 3.0 }{ 7.199999999999999 }{\colv}{\numv}
\onesquareTwo{ 3.5999999999999996 }{ 7.199999999999999 }{\colv}{\numv}
\onesquareTwo{ 4.2 }{ 7.199999999999999 }{\colt}{\numt}
\onesquareTwo{ 4.8 }{ 7.199999999999999 }{\colt}{\numt}
\onesquareTwo{ 5.3999999999999995 }{ 7.199999999999999 }{\colt}{\numt}
\onesquareTwo{ 6.0 }{ 7.199999999999999 }{\colh}{\numh}
\onesquareTwo{ 6.6 }{ 7.199999999999999 }{\colx}{\numx}
\onesquareTwo{ 7.199999999999999 }{ 7.199999999999999 }{\colx}{\numx}
\onesquareTwo{ 7.8 }{ 7.199999999999999 }{\colx}{\numx}
\onesquareTwo{ 8.4 }{ 7.199999999999999 }{\colx}{\numx}
\onesquareTwo{ 9.0 }{ 7.199999999999999 }{\colx}{\numx}
\onesquareTwo{ 9.6 }{ 7.199999999999999 }{\cols}{\nums}
\onesquareTwo{ 10.2 }{ 7.199999999999999 }{\cols}{\nums}
\onesquareTwo{ 0.0 }{ 6.6 }{\coln}{\numn}
\onesquareTwo{ 0.6 }{ 6.6 }{\coll}{\numl}
\onesquareTwo{ 1.2 }{ 6.6 }{\colu}{\numu}
\onesquareTwo{ 1.7999999999999998 }{ 6.6 }{\colu}{\numu}
\onesquareTwo{ 2.4 }{ 6.6 }{\colj}{\numj}
\onesquareTwo{ 3.0 }{ 6.6 }{\colv}{\numv}
\onesquareTwo{ 3.5999999999999996 }{ 6.6 }{\colv}{\numv}
\onesquareTwo{ 4.2 }{ 6.6 }{\colt}{\numt}
\onesquareTwo{ 4.8 }{ 6.6 }{\colt}{\numt}
\onesquareTwo{ 5.3999999999999995 }{ 6.6 }{\colt}{\numt}
\onesquareTwo{ 6.0 }{ 6.6 }{\colh}{\numh}
\onesquareTwo{ 6.6 }{ 6.6 }{\colx}{\numx}
\onesquareTwo{ 7.199999999999999 }{ 6.6 }{\colx}{\numx}
\onesquareTwo{ 7.8 }{ 6.6 }{\colx}{\numx}
\onesquareTwo{ 8.4 }{ 6.6 }{\colx}{\numx}
\onesquareTwo{ 9.0 }{ 6.6 }{\colx}{\numx}
\onesquareTwo{ 9.6 }{ 6.6 }{\cols}{\nums}
\onesquareTwo{ 10.2 }{ 6.6 }{\cols}{\nums}
\onesquareTwo{ 0.0 }{ 6.0 }{\coln}{\numn}
\onesquareTwo{ 0.6 }{ 6.0 }{\coll}{\numl}
\onesquareTwo{ 1.2 }{ 6.0 }{\colu}{\numu}
\onesquareTwo{ 1.7999999999999998 }{ 6.0 }{\colu}{\numu}
\onesquareTwo{ 2.4 }{ 6.0 }{\colj}{\numj}
\onesquareTwo{ 3.0 }{ 6.0 }{\colv}{\numv}
\onesquareTwo{ 3.5999999999999996 }{ 6.0 }{\colv}{\numv}
\onesquareTwo{ 4.2 }{ 6.0 }{\colt}{\numt}
\onesquareTwo{ 4.8 }{ 6.0 }{\colt}{\numt}
\onesquareTwo{ 5.3999999999999995 }{ 6.0 }{\colt}{\numt}
\onesquareTwo{ 6.0 }{ 6.0 }{\colh}{\numh}
\onesquareTwo{ 6.6 }{ 6.0 }{\colx}{\numx}
\onesquareTwo{ 7.199999999999999 }{ 6.0 }{\colx}{\numx}
\onesquareTwo{ 7.8 }{ 6.0 }{\colx}{\numx}
\onesquareTwo{ 8.4 }{ 6.0 }{\colx}{\numx}
\onesquareTwo{ 9.0 }{ 6.0 }{\colx}{\numx}
\onesquareTwo{ 9.6 }{ 6.0 }{\cols}{\nums}
\onesquareTwo{ 10.2 }{ 6.0 }{\cols}{\nums}
\onesquareTwo{ 0.0 }{ 5.3999999999999995 }{\coln}{\numn}
\onesquareTwo{ 0.6 }{ 5.3999999999999995 }{\coll}{\numl}
\onesquareTwo{ 1.2 }{ 5.3999999999999995 }{\colu}{\numu}
\onesquareTwo{ 1.7999999999999998 }{ 5.3999999999999995 }{\colu}{\numu}
\onesquareTwo{ 2.4 }{ 5.3999999999999995 }{\colj}{\numj}
\onesquareTwo{ 3.0 }{ 5.3999999999999995 }{\colv}{\numv}
\onesquareTwo{ 3.5999999999999996 }{ 5.3999999999999995 }{\colv}{\numv}
\onesquareTwo{ 4.2 }{ 5.3999999999999995 }{\colt}{\numt}
\onesquareTwo{ 4.8 }{ 5.3999999999999995 }{\colt}{\numt}
\onesquareTwo{ 5.3999999999999995 }{ 5.3999999999999995 }{\colt}{\numt}
\onesquareTwo{ 6.0 }{ 5.3999999999999995 }{\colh}{\numh}
\onesquareTwo{ 6.6 }{ 5.3999999999999995 }{\colf}{\numf}
\onesquareTwo{ 7.199999999999999 }{ 5.3999999999999995 }{\colq}{\numq}
\onesquareTwo{ 7.8 }{ 5.3999999999999995 }{\colq}{\numq}
\onesquareTwo{ 8.4 }{ 5.3999999999999995 }{\coli}{\numi}
\onesquareTwo{ 9.0 }{ 5.3999999999999995 }{\coli}{\numi}
\onesquareTwo{ 9.6 }{ 5.3999999999999995 }{\cols}{\nums}
\onesquareTwo{ 10.2 }{ 5.3999999999999995 }{\cols}{\nums}
\onesquareTwo{ 0.0 }{ 4.8 }{\coln}{\numn}
\onesquareTwo{ 0.6 }{ 4.8 }{\coll}{\numl}
\onesquareTwo{ 1.2 }{ 4.8 }{\colu}{\numu}
\onesquareTwo{ 1.7999999999999998 }{ 4.8 }{\colu}{\numu}
\onesquareTwo{ 2.4 }{ 4.8 }{\colj}{\numj}
\onesquareTwo{ 3.0 }{ 4.8 }{\colv}{\numv}
\onesquareTwo{ 3.5999999999999996 }{ 4.8 }{\colv}{\numv}
\onesquareTwo{ 4.2 }{ 4.8 }{\colt}{\numt}
\onesquareTwo{ 4.8 }{ 4.8 }{\colt}{\numt}
\onesquareTwo{ 5.3999999999999995 }{ 4.8 }{\colt}{\numt}
\onesquareTwo{ 6.0 }{ 4.8 }{\colh}{\numh}
\onesquareTwo{ 6.6 }{ 4.8 }{\colf}{\numf}
\onesquareTwo{ 7.199999999999999 }{ 4.8 }{\colq}{\numq}
\onesquareTwo{ 7.8 }{ 4.8 }{\colq}{\numq}
\onesquareTwo{ 8.4 }{ 4.8 }{\coli}{\numi}
\onesquareTwo{ 9.0 }{ 4.8 }{\coli}{\numi}
\onesquareTwo{ 9.6 }{ 4.8 }{\cols}{\nums}
\onesquareTwo{ 10.2 }{ 4.8 }{\cols}{\nums}
\onesquareTwo{ 0.0 }{ 4.2 }{\coln}{\numn}
\onesquareTwo{ 0.6 }{ 4.2 }{\coll}{\numl}
\onesquareTwo{ 1.2 }{ 4.2 }{\colu}{\numu}
\onesquareTwo{ 1.7999999999999998 }{ 4.2 }{\colu}{\numu}
\onesquareTwo{ 2.4 }{ 4.2 }{\colj}{\numj}
\onesquareTwo{ 3.0 }{ 4.2 }{\colv}{\numv}
\onesquareTwo{ 3.5999999999999996 }{ 4.2 }{\colv}{\numv}
\onesquareTwo{ 4.2 }{ 4.2 }{\colt}{\numt}
\onesquareTwo{ 4.8 }{ 4.2 }{\colt}{\numt}
\onesquareTwo{ 5.3999999999999995 }{ 4.2 }{\colt}{\numt}
\onesquareTwo{ 6.0 }{ 4.2 }{\colh}{\numh}
\onesquareTwo{ 6.6 }{ 4.2 }{\colf}{\numf}
\onesquareTwo{ 7.199999999999999 }{ 4.2 }{\colq}{\numq}
\onesquareTwo{ 7.8 }{ 4.2 }{\colq}{\numq}
\onesquareTwo{ 8.4 }{ 4.2 }{\coli}{\numi}
\onesquareTwo{ 9.0 }{ 4.2 }{\coli}{\numi}
\onesquareTwo{ 9.6 }{ 4.2 }{\cols}{\nums}
\onesquareTwo{ 10.2 }{ 4.2 }{\cols}{\nums}
\onesquareTwo{ 0.0 }{ 3.5999999999999996 }{\coln}{\numn}
\onesquareTwo{ 0.6 }{ 3.5999999999999996 }{\coll}{\numl}
\onesquareTwo{ 1.2 }{ 3.5999999999999996 }{\colu}{\numu}
\onesquareTwo{ 1.7999999999999998 }{ 3.5999999999999996 }{\colu}{\numu}
\onesquareTwo{ 2.4 }{ 3.5999999999999996 }{\colj}{\numj}
\onesquareTwo{ 3.0 }{ 3.5999999999999996 }{\colv}{\numv}
\onesquareTwo{ 3.5999999999999996 }{ 3.5999999999999996 }{\colv}{\numv}
\onesquareTwo{ 4.2 }{ 3.5999999999999996 }{\colr}{\numr}
\onesquareTwo{ 4.8 }{ 3.5999999999999996 }{\colr}{\numr}
\onesquareTwo{ 5.3999999999999995 }{ 3.5999999999999996 }{\colr}{\numr}
\onesquareTwo{ 6.0 }{ 3.5999999999999996 }{\colh}{\numh}
\onesquareTwo{ 6.6 }{ 3.5999999999999996 }{\colf}{\numf}
\onesquareTwo{ 7.199999999999999 }{ 3.5999999999999996 }{\colq}{\numq}
\onesquareTwo{ 7.8 }{ 3.5999999999999996 }{\colq}{\numq}
\onesquareTwo{ 8.4 }{ 3.5999999999999996 }{\coli}{\numi}
\onesquareTwo{ 9.0 }{ 3.5999999999999996 }{\coli}{\numi}
\onesquareTwo{ 9.6 }{ 3.5999999999999996 }{\cols}{\nums}
\onesquareTwo{ 10.2 }{ 3.5999999999999996 }{\cols}{\nums}
\onesquareTwo{ 0.0 }{ 3.0 }{\coln}{\numn}
\onesquareTwo{ 0.6 }{ 3.0 }{\coll}{\numl}
\onesquareTwo{ 1.2 }{ 3.0 }{\colu}{\numu}
\onesquareTwo{ 1.7999999999999998 }{ 3.0 }{\colu}{\numu}
\onesquareTwo{ 2.4 }{ 3.0 }{\colj}{\numj}
\onesquareTwo{ 3.0 }{ 3.0 }{\colv}{\numv}
\onesquareTwo{ 3.5999999999999996 }{ 3.0 }{\colv}{\numv}
\onesquareTwo{ 4.2 }{ 3.0 }{\colr}{\numr}
\onesquareTwo{ 4.8 }{ 3.0 }{\colr}{\numr}
\onesquareTwo{ 5.3999999999999995 }{ 3.0 }{\colr}{\numr}
\onesquareTwo{ 6.0 }{ 3.0 }{\colh}{\numh}
\onesquareTwo{ 6.6 }{ 3.0 }{\colf}{\numf}
\onesquareTwo{ 7.199999999999999 }{ 3.0 }{\colq}{\numq}
\onesquareTwo{ 7.8 }{ 3.0 }{\colq}{\numq}
\onesquareTwo{ 8.4 }{ 3.0 }{\coli}{\numi}
\onesquareTwo{ 9.0 }{ 3.0 }{\coli}{\numi}
\onesquareTwo{ 9.6 }{ 3.0 }{\cols}{\nums}
\onesquareTwo{ 10.2 }{ 3.0 }{\cols}{\nums}
\onesquareTwo{ 0.0 }{ 2.4 }{\coln}{\numn}
\onesquareTwo{ 0.6 }{ 2.4 }{\coll}{\numl}
\onesquareTwo{ 1.2 }{ 2.4 }{\colu}{\numu}
\onesquareTwo{ 1.7999999999999998 }{ 2.4 }{\colu}{\numu}
\onesquareTwo{ 2.4 }{ 2.4 }{\colj}{\numj}
\onesquareTwo{ 3.0 }{ 2.4 }{\colv}{\numv}
\onesquareTwo{ 3.5999999999999996 }{ 2.4 }{\colv}{\numv}
\onesquareTwo{ 4.2 }{ 2.4 }{\colr}{\numr}
\onesquareTwo{ 4.8 }{ 2.4 }{\colr}{\numr}
\onesquareTwo{ 5.3999999999999995 }{ 2.4 }{\colr}{\numr}
\onesquareTwo{ 6.0 }{ 2.4 }{\cola}{\numa}
\onesquareTwo{ 6.6 }{ 2.4 }{\colf}{\numf}
\onesquareTwo{ 7.199999999999999 }{ 2.4 }{\colq}{\numq}
\onesquareTwo{ 7.8 }{ 2.4 }{\colq}{\numq}
\onesquareTwo{ 8.4 }{ 2.4 }{\colo}{\numo}
\onesquareTwo{ 9.0 }{ 2.4 }{\colo}{\numo}
\onesquareTwo{ 9.6 }{ 2.4 }{\colo}{\numo}
\onesquareTwo{ 10.2 }{ 2.4 }{\colo}{\numo}
\onesquareTwo{ 0.0 }{ 1.7999999999999998 }{\coln}{\numn}
\onesquareTwo{ 0.6 }{ 1.7999999999999998 }{\coll}{\numl}
\onesquareTwo{ 1.2 }{ 1.7999999999999998 }{\colu}{\numu}
\onesquareTwo{ 1.7999999999999998 }{ 1.7999999999999998 }{\colu}{\numu}
\onesquareTwo{ 2.4 }{ 1.7999999999999998 }{\colj}{\numj}
\onesquareTwo{ 3.0 }{ 1.7999999999999998 }{\colv}{\numv}
\onesquareTwo{ 3.5999999999999996 }{ 1.7999999999999998 }{\colv}{\numv}
\onesquareTwo{ 4.2 }{ 1.7999999999999998 }{\colr}{\numr}
\onesquareTwo{ 4.8 }{ 1.7999999999999998 }{\colr}{\numr}
\onesquareTwo{ 5.3999999999999995 }{ 1.7999999999999998 }{\colr}{\numr}
\onesquareTwo{ 6.0 }{ 1.7999999999999998 }{\cola}{\numa}
\onesquareTwo{ 6.6 }{ 1.7999999999999998 }{\colf}{\numf}
\onesquareTwo{ 7.199999999999999 }{ 1.7999999999999998 }{\colq}{\numq}
\onesquareTwo{ 7.8 }{ 1.7999999999999998 }{\colq}{\numq}
\onesquareTwo{ 8.4 }{ 1.7999999999999998 }{\colo}{\numo}
\onesquareTwo{ 9.0 }{ 1.7999999999999998 }{\colo}{\numo}
\onesquareTwo{ 9.6 }{ 1.7999999999999998 }{\colo}{\numo}
\onesquareTwo{ 10.2 }{ 1.7999999999999998 }{\colo}{\numo}
\onesquareTwo{ 0.0 }{ 1.2 }{\colm}{\numm}
\onesquareTwo{ 0.6 }{ 1.2 }{\colm}{\numm}
\onesquareTwo{ 1.2 }{ 1.2 }{\colm}{\numm}
\onesquareTwo{ 1.7999999999999998 }{ 1.2 }{\colm}{\numm}
\onesquareTwo{ 2.4 }{ 1.2 }{\colm}{\numm}
\onesquareTwo{ 3.0 }{ 1.2 }{\colm}{\numm}
\onesquareTwo{ 3.5999999999999996 }{ 1.2 }{\colm}{\numm}
\onesquareTwo{ 4.2 }{ 1.2 }{\colr}{\numr}
\onesquareTwo{ 4.8 }{ 1.2 }{\colr}{\numr}
\onesquareTwo{ 5.3999999999999995 }{ 1.2 }{\colr}{\numr}
\onesquareTwo{ 6.0 }{ 1.2 }{\colc}{\numc}
\onesquareTwo{ 6.6 }{ 1.2 }{\colc}{\numc}
\onesquareTwo{ 7.199999999999999 }{ 1.2 }{\colq}{\numq}
\onesquareTwo{ 7.8 }{ 1.2 }{\colq}{\numq}
\onesquareTwo{ 8.4 }{ 1.2 }{\colo}{\numo}
\onesquareTwo{ 9.0 }{ 1.2 }{\colo}{\numo}
\onesquareTwo{ 9.6 }{ 1.2 }{\colo}{\numo}
\onesquareTwo{ 10.2 }{ 1.2 }{\colo}{\numo}
\onesquareTwo{ 0.0 }{ 0.6 }{\colm}{\numm}
\onesquareTwo{ 0.6 }{ 0.6 }{\colm}{\numm}
\onesquareTwo{ 1.2 }{ 0.6 }{\colm}{\numm}
\onesquareTwo{ 1.7999999999999998 }{ 0.6 }{\colm}{\numm}
\onesquareTwo{ 2.4 }{ 0.6 }{\colm}{\numm}
\onesquareTwo{ 3.0 }{ 0.6 }{\colm}{\numm}
\onesquareTwo{ 3.5999999999999996 }{ 0.6 }{\colm}{\numm}
\onesquareTwo{ 4.2 }{ 0.6 }{\colr}{\numr}
\onesquareTwo{ 4.8 }{ 0.6 }{\colr}{\numr}
\onesquareTwo{ 5.3999999999999995 }{ 0.6 }{\colr}{\numr}
\onesquareTwo{ 6.0 }{ 0.6 }{\colc}{\numc}
\onesquareTwo{ 6.6 }{ 0.6 }{\colc}{\numc}
\onesquareTwo{ 7.199999999999999 }{ 0.6 }{\colq}{\numq}
\onesquareTwo{ 7.8 }{ 0.6 }{\colq}{\numq}
\onesquareTwo{ 8.4 }{ 0.6 }{\colo}{\numo}
\onesquareTwo{ 9.0 }{ 0.6 }{\colo}{\numo}
\onesquareTwo{ 9.6 }{ 0.6 }{\colo}{\numo}
\onesquareTwo{ 10.2 }{ 0.6 }{\colo}{\numo}

    \end{tikzpicture}
    \caption{Illustration of the impossibility result for $n=18$ (\Cref{thm:large-gap}). Even though almost each rectangle $t$ has area $t+1$, except for $t \in \{18, 22\}$ (where $t+1 > n$ is prime), the total area covered by turn $24$ is only $322 = 18^2- 2$, and naturally it is not possible to fill in the two remaining cells in turn $25$.  }
    \label{fig:large-gap}
\end{figure}

Before the proof, let us present some intuition for \Cref{thm:large-gap}.
Consider $m = n = 18$ (this example is illustrated in~\Cref{fig:large-gap}). As a result, $\tau(18 \cdot 18) = 24$
, and therefore the gap is
\[\gamma(m, n) = m \cdot n - T_{\tau(m \cdot n)} = 18 \cdot 18 - \frac{24 \cdot 25}{2} = 24.\] 
We also have $K(m, n) = \tau(m \cdot n) = 24$, implying that any perfect packing of the $18 \times 18$ grid will consist of $K(m, n) = 24$ rectangles. We claim that on turn 18, both choices of area, 18 and 19, lead to contradictions. Let us see what happens if area 18 is chosen on turn 18. In this case, even if area $t+1$ is chosen on every turn $t \neq 18$, the maximum sum of the areas we can achieve is \[2 + 3 + \ldots + 17 + 18 + \mathbf{18} + 20 + \ldots + 25 = 323 < 324,\] implying the $324$ cells of the $18 \times 18$ grid cannot be covered. 
On the other hand, if area $19$ is chosen on turn $18$, we run into a different issue: as 19 is a prime number it only allows for the rectangles $1 \times 19$ or $19 \times 1$, neither of which be can be packed into the $18 \times 18$ grid. As both cases lead to an impossibility, we conclude it is not possible to have a perfect game of~\packit~over the $18 \times 18$ grid. The proof for \Cref{thm:large-gap} generalizes this example.


\begin{proof}[Proof of~\Cref{thm:large-gap}]
    Let $p \in P(m, n)$. As $p \leq K(m, n)$ by definition of $P(m, n)$, turn $p-1$ is necessarily part of any perfect packing. At turn $p - 1$, one must choose between area $p - 1$ or area $p$. If area $p$ is chosen, then the rectangle must be either $1 \times p$ or $p \times 1$, due to the primality of $p$. However, by the definition of the set $P(m, n)$ we have $p > n \geq m$, and thus neither the $1 \times p$ nor the $p \times 1$ rectangle can be packed into the $m \times n$ grid. We conclude that for each $p \in P(m, n)$, the turn $p-1$ is not an expansion turn.

    If $K(m, n) + 1$ is prime, then the rectangle turn $K(m, n)$ cannot be an expansion turn. By definition, $K(m, n) + 1 \not\in P(m, n)$, so the number of turns that are not expansion turns is at least  $|P(m, n)| + \mathbf{1}_{K_p}$.  By \Cref{lemma:K-def}, the number of expansion turns is exactly $\gamma(m, n)$, which together with the previous fact implies that the total number of turns is at least 
    \begin{equation}
    |P(m, n)| + \mathbf{1}_{K_p} + \gamma(m, n).
    \label{eq:lb-turns}
    \end{equation}
    Suppose, expecting a contradiction that 
    \begin{equation}
    \gamma(m, n) > K(m, n) - |P(m, n)| - \mathbf{1}_{K_p},\label{eq:assumpt}
    \end{equation} and yet there exists a sequence of turns leading to a perfect packing for the $m \times n$ grid. By combining~\Cref{eq:lb-turns} and~\Cref{eq:assumpt}, the total number of turns is at least 
    \begin{align*}
    |P(m, n)| + \mathbf{1}_{K_p} + \gamma(m, n) &>  |P(m, n)| + \mathbf{1}_{K_p}  +  K(m, n) - |P(m, n)| - \mathbf{1}_{K_p}\\
    &= K(m, n),
    \end{align*}
    which is a contradiction, given the total number of turns must be exactly $K(m, n)$ according to~\Cref{lemma:K-def}.


\end{proof}

Combining~\Cref{thm:small-gap} and \Cref{thm:large-gap}, we obtain a range  of values for the gap of an $m \times n$ grid in which perfect packings are \emph{a priori} possible. So far, we have not found any examples of $m \times n$ grids whose gap belongs in this range and yet no perfect packings exist. Therefore, we pose the following conjecture

\begin{conjecture}
    Let $m \leq n$ be positive integers. Then, if 
    \[
       |P(m, n)| \, \leq  \, \gamma(m, n) \, \leq \, K(m, n) - |P(m, n)| - \mathbf{1}_{K_p}, 
    \]
    it is possible to complete a perfect game of~\packit~for the $m \times n$ grid.
    \label{conjecture:main}
\end{conjecture}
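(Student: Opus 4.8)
I would first reduce the conjecture to a purely static tiling statement, and then split that statement into an easy ``arithmetic'' half and a hard ``geometric'' half. The reduction is: a perfect game of \packit~on the $m\times n$ grid exists if and only if the grid can be partitioned into rectangles $R_1,\dots,R_K$ (with $K=K(m,n)$) admitting a labeling in which $R_t$ has area $t$ or $t+1$. One direction is just the definition; for the other, given such a partition one places $R_1,R_2,\dots,R_K$ in that order, and every placement is automatically legal because the pieces of a partition are pairwise disjoint, so $A_t\cap U_{t-1}=\varnothing$. Thus the turn structure is a red herring and it suffices to \emph{construct} an appropriate rectangle partition. By an exchange argument, a multiset of $K$ areas admits such a labeling precisely when, after bumping a chosen ``expansion set'' $E\subseteq\{1,\dots,K\}$, the areas are exactly $(t+\mathbf{1}[t\in E])_{t=1}^{K}$ (this sequence is automatically non-decreasing), and by \Cref{lemma:gap-def} one must have $|E|=\gamma(m,n)$.

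\textbf{Arithmetic half.} Using the two gap hypotheses, choose $E$. The obstructions extracted in the proofs of \Cref{thm:small-gap} and \Cref{thm:large-gap} say that $E$ is forced to contain every prime $p\in P(m,n)$, and is forbidden from containing any turn of the form $p-1$ with $p\in P(m,n)$, as well as the turn $K(m,n)$ when $K(m,n)+1$ is a prime exceeding $n$. These forced and forbidden sets are disjoint (outside a handful of tiny grids, where consecutive primes above $n$ would be needed), so the turns that are neither forced nor forbidden number at least $K(m,n)-2|P(m,n)|-\mathbf{1}_{K_p}$, and $E$ can be completed to exactly $\gamma(m,n)$ turns under the conjecture's hypothesis
\[
|P(m,n)|\;\le\;\gamma(m,n)\;\le\;K(m,n)-|P(m,n)|-\mathbf{1}_{K_p}.
\]
This fixes the target area profile and guarantees that the only thin pieces we are \emph{forced} to use are $1\times p$ strips with $m<p\le n$ prime.

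\textbf{Geometric half (the crux).} It remains to tile the $m\times n$ grid with rectangles of the chosen areas, where each area $a$ may be factored as $a=h v$ with $\min(h,v)\le m$ and $\max(h,v)\le n$. I would attempt a strong induction on $n$ (with $m$ ranging over $1\le m\le n$), discharging all cases with $n\le 50$ via the computational results of \Cref{sec:computation}. For the inductive step, isolate the $O(1)$ ``wide'' pieces --- the mandatory $1\times p$ strips for primes $p\in(m,n]$ together with the few largest rectangles --- pack them against one edge to carve off a sub-board $m'\times n'$ with $n'<n$, and apply the induction hypothesis there. The freedom in how composite areas factor is the main lever that should make such a carving possible.

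\textbf{Where I expect it to break.} The genuine difficulty is that the target multiset is rigid: it is the block $\{1,\dots,K\}$ with a chosen subset bumped by one. After carving off a border, the residual areas must again form such a block on the sub-board \emph{and} satisfy the corresponding gap/prime inequalities, which a naive carving will not respect. So a working proof most likely must choose the expansion set, the factorizations, and the geometric layout \emph{simultaneously}, plausibly via an explicit family of constructions organized by the size of $\gamma(m,n)$ relative to $K(m,n)$ --- the near-minimal-gap regime of \Cref{fig:small-gap} and the near-maximal-gap regime of \Cref{fig:large-gap} look structurally quite different --- and by a small residue class of $n$. The arithmetic half is routine; it is this geometric bookkeeping, threading a packing around the unavoidable thin prime strips while keeping the leftover area profile admissible, that is the real obstacle, and presumably the reason the statement is posed as a conjecture rather than proved.
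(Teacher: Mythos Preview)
The paper does not prove this statement: it is stated as \Cref{conjecture:main} and explicitly left open, appearing again in \Cref{sec:conclusion} as the first of the posed challenges (``Prove or refute \Cref{conjecture:main}''). So there is no paper proof to compare against.

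Your reduction to a static partition problem is correct and your arithmetic half is essentially sound: the forced expansion turns $P(m,n)$ and the forbidden turns $\{p-1:p\in P(m,n)\}\cup\{K(m,n):\mathbf{1}_{K_p}=1\}$ can only collide when two consecutive integers are both prime, i.e.\ only at $\{2,3\}$, so outside trivially small grids one can indeed complete $E$ to size $\gamma(m,n)$ under the hypothesis. That part is fine.

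The genuine gap is exactly where you locate it. Your inductive carving scheme does not work as stated: after stripping off a border, the residual area multiset is \emph{not} of the form $\{1,\dots,K'\}$ with a bumped subset, so the induction hypothesis does not apply to the sub-board. You would need a much stronger inductive statement (something like ``any admissible area profile on any sub-rectangle can be realized''), and no such statement is proved or even formulated. Your own final paragraph correctly diagnoses this, and your closing sentence --- that this bookkeeping is ``presumably the reason the statement is posed as a conjecture rather than proved'' --- is precisely the situation in the paper. What you have written is an honest outline of why the problem is hard, not a proof; the paper offers no more than that either, supplementing the conjecture only with the computational verification for $n\le 50$ in \Cref{sec:computation}.
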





Interestingly,~\Cref{thm:small-gap} is enough to construct infinite families of $n\times n$ grids that do not admit perfect packings.

\begin{theorem}
    There are infinitely many positive integers $n$ such that the $n \times n$ grid does not admit a perfect game of~\packit.
    \label{thm:basic-imposibility}
\end{theorem}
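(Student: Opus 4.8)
The plan is to apply \Cref{thm:small-gap} to square grids $n\times n$ whose gap vanishes, since then the existence of a single prime in the relevant range already forces impossibility. I would look for integers $n$ with $n^{2}$ equal to a triangular number: for such $n$ we have $n^{2}=T_{\tau(n^{2})}$, hence $\gamma(n,n)=0$ directly from the definition of the gap. For these $n$ it then suffices to exhibit one prime $p$ with $n<p\le K(n,n)=\tau(n^{2})$, because this gives $|P(n,n)|\ge 1>0=\gamma(n,n)$ and \Cref{thm:small-gap} applies.

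The first step is to produce infinitely many $n$ with $n^{2}$ triangular. Writing $n^{2}=T_{k}=k(k+1)/2$ and multiplying by $8$ and adding $1$, this is equivalent to $(2k+1)^{2}-2(2n)^{2}=1$, i.e.\ a Pell equation $u^{2}-2v^{2}=1$ with $u=2k+1$ and $v=2n$. This Pell equation has infinitely many solutions, generated from the fundamental solution $(3,2)$ via $u_{j}+v_{j}\sqrt2=(3+2\sqrt2)^{j}$; a one-line induction on the recurrence $u_{j+1}=3u_{j}+4v_{j}$, $v_{j+1}=2u_{j}+3v_{j}$ shows every $v_{j}$ is even (and every $u_{j}$ odd), so each solution yields an integer $n_{j}=v_{j}/2$ with $n_{j}^{2}$ triangular, and $n_{j}\to\infty$. (The first values are $n\in\{1,6,35,204,\dots\}$; note that $n=6$ is precisely the example worked out right after \Cref{thm:small-gap}.)

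The second step is to find the prime. From $n^{2}=T_{K}$ with $K=\tau(n^{2})=K(n,n)$ we get $K(K+1)=2n^{2}$, whence $\sqrt2\,n-1<K<\sqrt2\,n$. Since $6/5<\sqrt2$, for all sufficiently large $n$ we have $\tfrac{6}{5}n\le K$ and $n\ge 25$, so Nagura's theorem (for every integer $m\ge 25$ there is a prime in $(m,\tfrac{6}{5}m)$) supplies a prime $p$ with $n<p<\tfrac{6}{5}n\le K$, i.e.\ $p\in P(n,n)$. (One could instead invoke the prime number theorem, which already gives $|P(n,n)|=\pi(K)-\pi(n)\sim(\sqrt2-1)\,n/\ln n\to\infty$; any effective Bertrand-type estimate with a ratio below $\sqrt2$ will do.) Putting the two steps together: for all large enough $j$ the grid $n_{j}\times n_{j}$ satisfies $\gamma(n_{j},n_{j})=0<|P(n_{j},n_{j})|$, so by \Cref{thm:small-gap} it admits no perfect game of \packit; as the $n_{j}$ are distinct and unbounded, the theorem follows.

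The main obstacle is precisely this prime-counting input: Bertrand's postulate by itself only guarantees a prime in $(n,2n)$, which is useless here because we need a prime below $K\approx\sqrt2\,n\approx1.41\,n$. Hence some effective improvement of Bertrand (Nagura's $6n/5$, Schoenfeld-type explicit bounds, or simply the prime number theorem) is unavoidable; none of these is hard, but it is the only non-elementary ingredient, and a little care is needed at the boundary — the floor in the formula for $\tau$, and the finitely many small $n$ where $n\ge 25$ or $\tfrac{6}{5}n\le K$ fails — which is harmless since only infinitely many $n$ are required, not all of them.
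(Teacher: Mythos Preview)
Your proof is correct and follows the same overall strategy as the paper --- apply \Cref{thm:small-gap} to an infinite Pell-generated family of $n$ and invoke an effective prime-gap bound --- but your execution is cleaner in two respects. The paper works with $\gamma(n,n)=1$, which forces it to solve the non-homogeneous equation $t^{2}-8n^{2}=-7$ and then to find \emph{two} primes in $(n,K(n,n)]$, for which it appeals to Schoenfeld's bound. You instead target $\gamma(n,n)=0$ (square triangular numbers), which reduces to the standard Pell equation $u^{2}-2v^{2}=1$ and only requires a \emph{single} prime in $(n,K(n,n)]$; Nagura's $6n/5$ bound then suffices since $K(n,n)\approx\sqrt{2}\,n$. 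Your route is more elementary and avoids the separate ``combine a particular solution with the homogeneous family'' step; the paper's route, on the other hand, illustrates that the obstruction persists even when the gap is positive, which feeds more naturally into the subsequent Theorem~4 about arbitrary fixed gap $c$.
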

\begin{proof}
    By~\Cref{thm:small-gap}, if suffices to show that there are infinitely many values of $n$ such that $\gamma(n, n) = 1$ and $|P(n, n)| > 1$. First, consider the following claim.
    \begin{claim}
        For every $n \geq 100$, we have $K(n, n) \geq 1.4n$.
        \label{claim:1.4}
    \end{claim}
    \begin{proof}[Proof of~\Cref{claim:1.4}]
        Let $\ell = \lfloor 1.4n \rfloor$. It suffices to argue that $T_\ell \leq n^2$. As $\ell > n \geq 100$, we have $\ell < \frac{1}{100}\ell^2$, which we can use as follows.
        \begin{align*}
            T_\ell = \frac{\ell^2 + \ell}{2} \leq \frac{\frac{101\ell^2}{100}}{2} = 101\ell^2/200,
        \end{align*}
        and conclude by noting that 
        \[
        101\ell^2/200 \leq \frac{101}{200} \cdot \left( \frac{140}{100}n\right)^2 = \frac{1\,979\,600}{2\,000\,000}n^2 \leq n^2.
        \]
    \end{proof}
    Now, Schoenfeld proved in~\cite{schoenfeldSharperBoundsChebyshev1976} that for every $n > 3 \cdot 10^6$, there is always a prime number between $n$ and $\left(1 + \frac{1}{16957}\right)n$, which applied twice gives us that there are always (at least) two prime numbers between $n$ and $\left(1 + \frac{1}{16957}\right)^2 n \leq 1.4n$. Therefore, for $n > 3 \cdot 10^6$ we always have $|P(n, n)| > 1$. 
    It remains to prove that $\gamma(n, n) = 1$ infinitely often. We do this by using the theory of generalized Pell's equation. Indeed, the condition $\gamma(n, n) = 1$ can be written, by using notation $K := K(n, n)$, as 
    \begin{equation}
    n^2 - \frac{K(K+1)}{2} = 1,
    \label{eq:original}
     \end{equation}
    which after multiplying both sides by $8$ and rearranging is equivalent to
    \[
    8n^2 - (2K + 1)^2 = 7.
    \]
   Introducing the variable $t \coloneqq (2K+1)$ we consider the following equations.
    \begin{align}
        t^2 - 8n^2 &= -7, \label{eq:non-homogeneous}\\
        \left(t^{(h)}\right)^2 - 8\left(n^{(h)}\right)^2 &= 1.  \label{eq:homogeneous}
    \end{align}
    While~\Cref{eq:homogeneous} presents an \emph{``homogeneous''} Pell equation, for which it is well known that infinitely many solutions exist over the positive integers (cf. the problem of square triangular numbers~\cite{Barbeau2003-tc}), ~\Cref{eq:non-homogeneous} corresponds to a \emph{``non-homogeneous''} equation, less frequently studied. Similarly to the theory of ordinary differential equations, we can obtain a  set of solutions to the non-homogeneous equation by combining one \emph{initial solution} for it with a set of solutions to its homogeneous counterpart.
    Indeed, assume the existence of a solution $(n_0, t_0)$ to~\Cref{eq:non-homogeneous} over the positive integers, and $\left(n^{(h)}_i, t^{(h)}_i\right)$ a sequence of solutions to~\Cref{eq:homogeneous} over the positive integers, whose existence is standard (see e.g.,~\cite{Barbeau2003-tc}). 

    
    \begin{claim}
        The sequence $(n_i, t_i)$, defined as
        \begin{equation}
      (n_i, t_i) \coloneqq  \left(t_0 t^{(h)}_i + 8 n_0n^{(h)}_i, \; t_0 n^{(h)}_i + n_0 t^{(h)}_i\right),
      \label{eq:new_solutions}
      \end{equation}
      is an infinite family of solutions
     of~\Cref{eq:non-homogeneous} over the positive integers.
     \label{claim:infinite-solutions}
    \end{claim}
    \begin{proof}[Proof of~\Cref{claim:infinite-solutions}]
    \newcommand{\sn}{n^{(h)}_i}
    \newcommand{\tn}{t^{(h)}_i}
        By assumption, $(n_0, t_0)$ is a solution of~\Cref{eq:non-homogeneous}, and $\left(\sn, \tn\right)$ is a solution of~\Cref{eq:homogeneous}. Thus, we have
        \begin{align*}
           -7 &= \left(t_0^2 - 8n_0^2 \right)\left(\left(\tn\right)^2 - 8\left(\sn\right)^2\right) \\
           &= (t_0 + \sqrt{8}n_0)(t_0 - \sqrt{8}n_0)\left(\tn + \sqrt{8}\sn\right)\left(\tn - \sqrt{8}\sn \right)\\
           &= \left[(t_0 + \sqrt{8}n_0)  \left(\tn + \sqrt{8}\sn\right)\right] \cdot \left[ (t_0 - \sqrt{8}n_0)\left(\tn - \sqrt{8}\sn \right) \right]\\
           &=\left[\left(t_0\tn + 8n_0 \sn\right) + \sqrt{8}\left(t_0\sn + n_0\tn\right) \right]\\ &\,\,\; \cdot\left[\left(t_0\tn + 8n_0 \sn\right) - \sqrt{8}\left(t_0\sn +n_0\tn\right)\right]\\
           &= \left(t_0\tn + 8n_0 \sn\right)^2 - 8\left(t_0\sn + n_0\tn\right)^2\\
           &= n_i^2 - 8t_i^2. \qedhere
        \end{align*}
    \end{proof}
    
   As we can provide an initial solution~$(n_0, t_0) := (11, 31)$ to~\Cref{eq:non-homogeneous}, we conclude by~\Cref{claim:infinite-solutions} that it has infinitely many solutions over the positive integers. 
We now finish the proof by the following claim.

   \begin{claim}
       Every solution $(n_i, t_i)$ to~\Cref{eq:non-homogeneous} over the positive integers with $n_i > 3 \cdot 10^6$ corresponds to a value of $n$ such that the $n\times n$ grid does not admit a perfect game of~\packit.
       \label{claim:back-to-original}
   \end{claim}
   \begin{proof}[Proof of~\Cref{claim:back-to-original}]
       Let $(n_i, t_i)$ be a solution to~\Cref{eq:non-homogeneous} and let us argue that the $n_i \times n_i$ does not admit a perfect game of~\packit. First, consider that $t_i$ must be odd, as $t_i^2 = 1 + 8n_i^2$, by~\Cref{eq:non-homogeneous}. Therefore $(t_i - 1)/2$ is a positive integer. We now a argue that $(t_i -1)/2$ indeed matches the definition of $K(n_i, n_i)$.  Let us denote $(t_i-1)/2$ by $K'$, and we will argue that indeed $K' = K(n_i, n_i)$. To see, this, consider that as~\Cref{eq:non-homogeneous} has the same set of solutions as~\Cref{eq:original}, it must be the case that 
       \[
       n_i^2 - \frac{K' (K'+1)}{2} = 1,
       \]
       implying that $T_{K'} = n_i^2 - 1 \leq n_i^2$. Moreover, we have that \[ T_{K'+1} = T_{K'} + (K'+1) = n_i^2 + K' > n_i^2, \]
       thereby confirming that $K' = \tau(n_i^2) = K(n_i, n_i)$. Taking $n := n_i$, we have by construction that
       \(
            \gamma(n, n) = 1,
       \)
       and as $n > 3 \cdot 10^6$ we have $|P(n, n)| > 1$. Therefore the condition of~\Cref{thm:small-gap} applies to $n$, implying the $n \times n$ grid does not admit a perfect packing. This concludes the proof of the entire theorem.
   \end{proof}
    \renewcommand{\qedsymbol}{}
\end{proof}

Let us define notation $\gamma^{-1}(c)$ to denote the set 
\(
    \{ n \in \mathbb{N}^{>0} \; | \; \gamma(n, n) = c\}.
\)
The previous proof showed that there are infinitely many values of $n \in \gamma^{-1}(1)$ that do not admit perfect packings. We now show a much stronger statement.

\begin{theorem}
For every value $c \geq 0$, only a finite number of values $n \in \gamma^{-1}(c)$ allow for a perfect packing of the $n \times n$ grid.

\end{theorem}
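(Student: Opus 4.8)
The plan is to reduce the statement to a purely analytic fact about the density of primes, using the ``small gap'' criterion of \Cref{thm:small-gap}. Recall that for $n \in \gamma^{-1}(c)$ we have, by definition, $\gamma(n,n) = c$. Hence \Cref{thm:small-gap} says that whenever $|P(n,n)| > c$, the $n \times n$ grid does \emph{not} admit a perfect game of \packit. So it suffices to prove the following: $|P(n,n)| \to \infty$ as $n \to \infty$. Granting this, there is a threshold $N_c$ with $|P(n,n)| > c$ for all $n > N_c$; consequently every $n \in \gamma^{-1}(c)$ that admits a perfect packing satisfies $n \le N_c$, and there are only finitely many such $n$. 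Note this argument does not even need to know whether $\gamma^{-1}(c)$ itself is finite or infinite.

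To establish $|P(n,n)| \to \infty$, recall that $P(n,n) = \{\, p \text{ prime} : n < p \le K(n,n)\,\}$, so $|P(n,n)| = \pi(K(n,n)) - \pi(n)$, where $\pi$ denotes the prime-counting function. By \Cref{claim:1.4} we have $K(n,n) = \tau(n^2) \ge \lfloor 1.4\,n\rfloor$ for all $n \ge 100$ (alternatively, the closed form $\tau(r) = \bigl\lfloor \tfrac{\sqrt{8r+1}-1}{2}\bigr\rfloor$ gives $K(n,n) \ge \sqrt{2}\,n - 2$ directly), and in particular $K(n,n) \ge 1.3\,n$ for all large $n$. Since $\pi$ is nondecreasing, $|P(n,n)| \ge \pi(1.3\,n) - \pi(n)$ for such $n$. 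Now by the Prime Number Theorem $\pi(x) = (1+o(1))\,x/\ln x$, and since $\ln(1.3\,n) = \ln n + \ln 1.3 = (1+o(1))\ln n$, we obtain
\[
\pi(1.3\,n) - \pi(n) \;=\; \bigl(1.3 - 1 + o(1)\bigr)\,\frac{n}{\ln n} \;=\; (0.3 + o(1))\,\frac{n}{\ln n} \;\longrightarrow\; \infty .
\]
This proves the claim, and with it the theorem. If an effective (computable) $N_c$ is wanted, one may replace the Prime Number Theorem by explicit lower bounds for $\pi(x)$ of Rosser--Schoenfeld type (see, e.g., \cite{schoenfeldSharperBoundsChebyshev1976}), which give the same divergence with an explicit rate.

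The only genuine input here is the quantitative density of primes in the interval $(n, K(n,n)] = (n, (\sqrt{2}+o(1))\,n]$, which is why I expect the prime-counting estimate to be the crux; everything else is bookkeeping with the definitions of $\tau$, $K$, and $\gamma$, together with the already-proven \Cref{thm:small-gap}. One minor point to handle carefully is the floor in $K(n,n) \ge \lfloor 1.4\,n\rfloor$, but since we only need $K(n,n) \ge \lambda n$ for some fixed $\lambda > 1$ and all large $n$, any of the above lower bounds on $\tau(n^2)$ is more than enough, and the $o(1)$ losses coming from the floor and from $\ln(1.3\,n)$ versus $\ln n$ are harmless. (Note also that merely iterating the prime-gap bound used in the proof of \Cref{thm:basic-imposibility} shows that $|P(n,n)|$ is bounded below by a large constant for all large $n$, which already settles the statement for all small $c$; the Prime Number Theorem is what upgrades this to arbitrary $c$.)
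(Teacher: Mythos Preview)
Your proof is correct and follows essentially the same approach as the paper: reduce via \Cref{thm:small-gap} to showing $|P(n,n)|>c$ for all large $n$, invoke \Cref{claim:1.4} to get $K(n,n)\ge\lambda n$ for some $\lambda>1$, and then appeal to a prime-density fact. The only difference is cosmetic: the paper iterates Dusart's explicit prime-gap bound $c+1$ times to exhibit $c+1$ primes in $(n,K(n,n)]$, whereas you use the Prime Number Theorem to show $\pi(K(n,n))-\pi(n)\to\infty$ directly; you even flag the Rosser--Schoenfeld route for effectivity, which is morally what the paper does.
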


\begin{proof}
By~\Cref{thm:small-gap}, it suffices to show that for every value $c \geq 0$, there are only finitely many values of $n$ such that 
\[
    |P(n, n)| = \{ n < p \leq K(n, n) \mid p \text { is prime}  \} \leq c
\]
We will do so by using the following improvement on Bertrand's postulate due to Dusart.

\begin{proposition}[\cite{dusar1998}]
    For every value of $n > 3275$, there exists a prime number $p$ such that 
    \[
    n < p \leq n\left(1 + \frac{1}{2\ln^2 n} \right).
    \]
    \label{prop:dusar}
\end{proposition}
In particular, if we apply~\Cref{prop:dusar} exactly $c+1$ times, we obtain that 

\[
\left|\left\{ n < p \leq n \left(1 + \frac{1}{2\ln^2 n} \right)^{c+1} \;\Big\vert\;\; p \text { is prime}  \right\}\right| \geq c+1, \quad \text{for every } n > 3275.
\]
 Now, let us see that for every sufficiently large $n$ it holds that 
\[
 n\left(1 + \frac{1}{2\ln^2 n} \right)^{c+1} \leq K(n, n),
\]
which will be enough to conclude. Indeed, recall that by~\Cref{claim:1.4} we have that $K(n, n) \geq 1.4n$ for $n \geq 100$, and hence it only remains for us to show that for sufficiently large $n$ we have 
\[
    \left(1 + \frac{1}{2\ln^2 n} \right)^{c+1} \leq 1.4,
\]
which must be true since the LHS is monotonically decreasing in $n$ and its limit when $n$ goes to infinity is $1$.
\end{proof}





\begin{theorem}
    For every even $n\geq 2$, the $2 \times \frac{n^2}{2}$ grid always admits a perfect game of~\packit.
    \label{thm:2xn2}
\end{theorem}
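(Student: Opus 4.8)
The plan is to pack the $2\times W$ grid (write $W:=n^2/2$, an integer because $n$ is even) using only \emph{horizontal} rectangles, arranged in a simple two‑row pattern. Set $K:=K(2,W)=\tau(n^2)$ and $\gamma:=\gamma(2,W)=n^2-T_K$. For each turn $t\in\{1,\dots,K\}$ I will decide an area $a_t\in\{t,t+1\}$ and a row $c(t)\in\{0,1\}$, and I will place $r_t$ as a $1\times a_t$ strip in row $c(t)$, flush against whatever is already in that row. Such a play is automatically legal: the filled part of each row is always a left‑justified prefix, so a new strip lands only on empty cells, and it stays inside the grid provided the total length charged to each row is at most $W$ and every individual $a_t$ is at most $W$. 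By \Cref{lemma:K-def} and \Cref{lemma:gap-def} a perfect game must have exactly $K$ turns and exactly $\gamma$ expansion turns (hence $\sum_t a_t=T_K+\gamma=n^2=2W$), so the whole problem reduces to the arithmetic statement: choose the $a_t$ and $c$ so that exactly $\gamma$ turns are expansion turns and $\sum_{c(t)=0}a_t=\sum_{c(t)=1}a_t=W$.

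For the arithmetic part I would first choose the colouring while ignoring expansions, then install the expansions. Split $\{1,\dots,K\}$ into $T_0\sqcup T_1$ with $|T_0|=\lfloor K/2\rfloor$ and $\sum_{t\in T_0}t=\sigma$, where $\sigma:=\lceil T_K/2\rceil$. This is possible by the standard fact that the $k$‑element subsets of $\{1,\dots,K\}$ realise exactly the contiguous block of sums $\bigl[T_k,\ T_K-T_{K-k}\bigr]$ (one moves between consecutive sums by swapping an element for an adjacent one); a short computation shows $2T_{\lceil K/2\rceil}\le T_K$ for all $K\ge 2$, which places $\sigma$ inside that block for $k=\lfloor K/2\rfloor$. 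Now designate as expansion turns any $\lfloor\gamma/2\rfloor$ turns lying in $T_0$ and any $\lceil\gamma/2\rceil$ turns lying in $T_1$, and set $a_t=t+1$ on those turns, $a_t=t$ otherwise. Using $T_K\equiv\gamma\pmod 2$ (because $n^2=T_K+\gamma$ is even), one gets $\sigma+\lfloor\gamma/2\rfloor=(T_K+\gamma)/2=W$, hence $\sum_{t\in T_0}a_t=W$ and therefore $\sum_{t\in T_1}a_t=n^2-W=W$, exactly as required. Placing each $r_t$ as a $1\times a_t$ strip flush‑left in row $c(t)$, in the order $t=1,2,\dots,K$, then yields a perfect game.

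It remains to verify the side conditions. Feasibility of the expansion assignment is just monotonicity of $\lfloor\cdot\rfloor$ and $\lceil\cdot\rceil$ applied to $\gamma\le K$ (which itself follows from $n^2\le T_{K+1}-1$): $\lfloor\gamma/2\rfloor\le|T_0|$ and $\lceil\gamma/2\rceil\le|T_1|$. One also needs every chosen area to be $\le W$; since $a_t\le t+1\le K+1$, and a bound such as $\tau(n^2)<\tfrac32 n$ gives $K+1\le W$ for all even $n\ge 4$, the only worry is small $n$. For $n=2$ the recipe gives $K=2$, $\gamma=1$, $T_0=\{2\}$, turn $1$ as the single expansion turn, areas $a_1=a_2=2=W$, i.e. two $1\times2$ strips stacked — a perfect $2\times2$ game. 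The one genuinely delicate point is the combination ``$\gamma=K$'', where every turn is forced to be an expansion and so turn $K$ uses area $K+1$: here one should note that $\gamma=K$ forces $2n^2=K(K+3)$, which (checking divisibility by $3$) forces $K$ even and $n$ fairly large, so $K+1\le W$ holds comfortably.

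I expect the only real obstacle to be the bookkeeping: making sure every floor/ceiling inequality holds uniformly and that the handful of smallest even $n$ behave, rather than anything conceptual — the geometric content (horizontal strips greedily stacked in two rows always form a legal game) is essentially free. If the clean counting argument above hit an unexpected snag, the fallback would be a more picture‑driven construction: tile most of the strip with vertical $2\times w$ blocks carrying the even areas, pair up the (necessarily evenly many) odd‑area strips, and absorb each pair into a small mixed block, choosing the expansion turns so that the odd/even split comes out right.
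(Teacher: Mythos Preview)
Your argument is correct and takes a genuinely different route from the paper. The paper gives an explicit, three-case construction depending on where $\gamma_n$ falls relative to $n/2$ and $n-1$: it puts rectangles $1,\dots,n-1$ in row one and $n,\dots,K$ in row two, then performs expansions and a single concrete swap to balance the two row-lengths. Your approach instead reduces everything to a single uniform subset-sum statement (the $\lfloor K/2\rfloor$-subsets of $\{1,\dots,K\}$ hit every sum in $[T_{\lfloor K/2\rfloor},\,T_K-T_{\lceil K/2\rceil}]$), picks a balanced partition in one shot, and then sprinkles the $\gamma$ expansions evenly between the two rows using the parity observation $T_K\equiv\gamma\pmod 2$. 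This buys you a case-free proof; the paper's version buys a completely explicit placement that one can draw immediately, without invoking the subset-sum lemma.

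One small point: your final paragraph about ``$\gamma=K$'' is unnecessary and the parenthetical ``(checking divisibility by $3$) forces $K$ even'' is not right. You have already shown $K+1\le 3n/2\le n^2/2=W$ for every even $n\ge 4$, and you handle $n=2$ by hand, so \emph{every} area $a_t\le K+1$ fits regardless of whether turn $K$ is expanded. You can simply delete that paragraph.
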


\begin{proof}

The proof is constructive. Let $K \coloneqq K\left(2, \frac{n^2}{2}\right)$. As a first step, we place the first $n-1$ rectangles (i.e., $1 \times t$ for $t \in \{1, \ldots, n-1\}$) in the first row, one after another, thus covering the first $\frac{n(n-1)}{2} < \frac{n^2}{2}$ cells of the first row. 
Some of these rectangles will be \emph{expanded} later on in order to fill up the first row, meaning that the rectangle $1 \times t$, used in turn $t$ will be replaced by a rectangle $1 \times (t+1)$. The remaining $K-n-1$ rectangles, for $t \geq n$, will be placed on the second row. We might have to move some rectangles from the first row to the second row or vice-versa. The proof proceeds by cases over $\gamma\left(2, \frac{n^2}{2}\right)$, which we will abbreviate by $\gamma_n$ to alleviate notation.\\


\subparagraph*{(\textbf{Case 1:} $\gamma_n \leq \frac{n}{2}$)}
As introduced earlier, the  first step is to place the first $n - 1$ rectangles in the row one and the rest in row two. For the moment we do not care if row two is too long or row one too short; we will deal with that in a moment. Next, expand the first $\gamma_n$ rectangles of row one. Originally, row one was $\frac{n^2}{2} - \frac{n(n-1)}{2} = \frac{n}{2}$ cells too short, and after the expansion of the first $\gamma_n$ rectangles it is $\frac{n}{2} - \gamma_n$ cells too short. By~\Cref{lemma:gap-def}, the $\gamma_n$ expansions in row one, guarantee that the total area of rectangles in row one and two adds up to exactly $n^2$. As a result, row two must be exactly $\frac{n}{2} - \gamma_n$ cells too large.  If $\gamma_n$ were to be exactly $\frac{n}{2}$, we would be done immediately.
Otherwise, we will swap a rectangle from row one with a rectangle from row two. Indeed, note that $r_{\frac{n}{2} + \gamma_n}$, the $1 \times \frac{n}{2} + \gamma_n$ rectangle, is still on row one, and it was not expanded. Therefore, we can swap $r_{\frac{n}{2} + \gamma_n}$ (from row one) with $r_n$ (from row two). As a result, row one has grown by $n - \left(\frac{n}{2} + \gamma_n \right) = \frac{n}{2} - \gamma_n$ cells, and row two has shrunk by the same amount. Therefore both rows have reached their desired length. This case is illustrated in~\Cref{fig:case-1-proof}.\\
\begin{figure}
    \centering
    \begin{tikzpicture}
        \onesquareTwo{ 0.0 }{ 0.0 }{\cola}{\numa}
        \onesquareTwo{ 0.6 }{ 0.0 }{\colb}{\numb}
            \onesquareTwo{ 1.2 }{ 0.0 }{\colb}{\numb}
        \onesquareTwo{ 1.8 }{ 0.0 }{\colc}{\numc}
            \onesquareTwo{ 2.4 }{ 0.0 }{\colc}{\numc}
            \onesquareTwo{ 3.0 }{ 0.0 }{\colc}{\numc}
        \onesquareTwo{ 3.6 }{ 0.0 }{\colGG}{\;}
            \onesquareTwo{ 4.2 }{ 0.0 }{\colGG}{\;}
        \onesquareTwo{ 0.0 }{ -0.6 }{\cold}{\numd}
            \onesquareTwo{ 0.6 }{ -0.6 }{\cold}{\numd}
            \onesquareTwo{ 1.2 }{ -0.6 }{\cold}{\numd}
            \onesquareTwo{ 1.8 }{ -0.6 }{\cold}{\numd}
        \onesquareTwo{ 2.4 }{ -0.6 }{\cole}{\nume}
            \onesquareTwo{ 3.0 }{ -0.6 }{\cole}{\nume}
            \onesquareTwo{ 3.6 }{ -0.6 }{\cole}{\nume}
            \onesquareTwo{ 4.2 }{ -0.6 }{\cole}{\nume}
            \onesquareTwo{ 4.8 }{ -0.6 }{\cole}{\nume}

        \newcommand{\vgap}{3}
        
        \onesquareTwo{ 0.0 }{ 0.0 -\vgap}{\cola}{\numa}
            \onesquareTwo{ 0.6 }{ 0.0 -\vgap}{\cola}{\numa}
        \onesquareTwo{ 1.2 }{ 0.0 -\vgap}{\colb}{\numb}
            \onesquareTwo{ 1.8 }{ 0.0 -\vgap}{\colb}{\numb}
        \onesquareTwo{ 2.4 }{ 0.0 -\vgap}{\colc}{\numc}
            \onesquareTwo{ 3.0 }{ 0.0 -\vgap}{\colc}{\numc}
            \onesquareTwo{ 3.6 }{ 0.0 -\vgap}{\colc}{\numc}
        \onesquareTwo{ 4.2 }{ 0.0-\vgap }{\colGG}{\;}
        \onesquareTwo{ 0.0 }{ -0.6 -\vgap}{\cold}{\numd}
            \onesquareTwo{ 0.6 }{ -0.6 -\vgap}{\cold}{\numd}
            \onesquareTwo{ 1.2 }{ -0.6 -\vgap}{\cold}{\numd}
            \onesquareTwo{ 1.8 }{ -0.6 -\vgap}{\cold}{\numd}
        \onesquareTwo{ 2.4 }{ -0.6 -\vgap}{\cole}{\nume}
            \onesquareTwo{ 3.0 }{ -0.6 -\vgap}{\cole}{\nume}
            \onesquareTwo{ 3.6 }{ -0.6 -\vgap}{\cole}{\nume}
            \onesquareTwo{ 4.2 }{ -0.6 -\vgap}{\cole}{\nume}
            \onesquareTwo{ 4.8 }{ -0.6 -\vgap}{\cole}{\nume}

        \onesquareTwo{ 0.0 }{ 0.0 -2*\vgap }{\cola}{\numa}
            \onesquareTwo{ 0.6 }{ 0.0 -2*\vgap }{\cola}{\numa}
        \onesquareTwo{ 1.2 }{ 0.0 -2*\vgap}{\colb}{\numb}
            \onesquareTwo{ 1.8 }{ 0.0 -2*\vgap}{\colb}{\numb}
        \onesquareTwo{ 2.4 }{ 0.0 -2*\vgap}{\cold}{\numd}
            \onesquareTwo{ 3.0 }{ 0.0 -2*\vgap}{\cold}{\numd}
            \onesquareTwo{ 3.6 }{ 0.0 -2*\vgap}{\cold}{\numd}
            \onesquareTwo{ 4.2 }{ 0.0 -2*\vgap}{\cold}{\numd}
        \onesquareTwo{ 0.0 }{ -0.6 -2*\vgap}{\colc}{\numc}
            \onesquareTwo{ 0.6 }{ -0.6 -2*\vgap}{\colc}{\numc}
            \onesquareTwo{ 1.2 }{ -0.6 -2*\vgap}{\colc}{\numc}
        \onesquareTwo{ 1.8 }{ -0.6 -2*\vgap}{\cole}{\nume}
            \onesquareTwo{ 2.4 }{ -0.6 -2*\vgap}{\cole}{\nume}
            \onesquareTwo{ 3.0 }{ -0.6 -2*\vgap}{\cole}{\nume}
            \onesquareTwo{ 3.6 }{ -0.6 -2*\vgap}{\cole}{\nume}
            \onesquareTwo{ 4.2 }{ -0.6-2*\vgap }{\cole}{\nume}

        \draw[->, thick] (2, -1.2) -- (2, -0.9 - \vgap/2);

         \draw[->, thick] (2, -1.2 - \vgap) -- (2, -0.9 - \vgap/2 -\vgap);

        \node (init) at (-2.5, -0.3) {(Initial Placement)};

         \node (exp) at (-2.5, -0.3 - \vgap) {(Expansion Step)};

          \node (swap) at (-2.5, -0.3 - 2*\vgap) {(Final Swap)};
    \end{tikzpicture}
    \caption{Illustration of Case 1 for the proof of~\Cref{thm:2xn2}, for $n = 4$. In this case $\gamma_n = 1$.}
    \label{fig:case-1-proof}
\end{figure}



\subparagraph*{(\textbf{Case 2:} $\frac{n}{2} < \gamma_n < n - 1$)}

As before, placing the first $n-1$ rectangles in row one makes the first row $\frac{n}{2}$ cells too short. Then, if we place rectangles $r_n, \ldots, r_K$ in row two, given that in total $\gamma_n$ expansions are required to achieve the total desired area (\Cref{lemma:gap-def}), it must be the case that row two is $\gamma_n - \frac{n}{2}$ cells too short.  Naively, we would simply expand $\frac{n}{2}$ rectangles in the first row, and $\gamma_n - \frac{n}{2}$ in the second row. However, the second row might contain fewer than $\gamma_n - \frac{n}{2}$ rectangles. To address this, we will transfer a rectangle from row one to row two, and perform more expansions on row one, which concentrates most of the rectangles.
Let us identify which rectangle will be moved from row one to row two. Let us define
\[i = \gamma_n - \frac{n}{2}.\]
Transfer $r_i$ from row one to row two, and expand the first $\gamma_n < n-1$ of the rectangles in row one. Since $\gamma_n$ expansions have been made, the total area is exactly $\frac{n^2}{2}$, and thus it only remains to argue that the top row has exactly $\frac{n^2}{2}$ cells covered. This is indeed the case as 
\[
\frac{n(n-1)}{2} + \gamma_n - i = \frac{n(n-1)}{2} + \gamma_n - \left(\gamma_n - \frac{n}{2}\right) = \frac{n^2}{2}.
\]

\subparagraph*{(\textbf{Case 3:} $\gamma_n \geq n - 1$)}

Place the first $n-1$ rectangles in row one and the rest in row two. Expanding all $n - 1$ rectangles in the first row, and then expand $\gamma_n - (n-1)$ rectangles in the second row. Let $i = \frac{n}{2} - 2$ (if $n \in \{2, 4\}$, the result can be checked manually, and therefore we assume $i \geq 1$ is a valid index for a rectangle). Move rectangle $r_i$ from row one to row two. As in the previous case, it only remains to argue that the number of cells in the top row is exactly $\frac{n^2}{2}$. This is indeed the case as the number is determined by 
\[
\frac{n(n-1)}{2} + (n-1) - (i+1) = \frac{n(n-1)}{2} + (n-1) - \left(\frac{n}{2} - 1 \right) = \frac{n^2}{2}.
\]

Having covered all cases, we conclude the entire proof.




\end{proof}







\section{Complexity Results}
\label{sec:complexity}
In turn $t$ of a game of~\packit, the turn in which each of the already placed rectangles was packed into the grid is irrelevant, and therefore a \emph{partially filled grid $G$} of dimensions $n \times n$ can be represented as an $n \times n$ matrix over $\{0, 1\}$.  We will assume this representation uses $O(n^2)$ bits.
Consider now the following problem:
\vspace{8pt}
\newcommand{\solitairep}{Solitaire\packit}
\csproblem{\solitairep}{A partially filled grid $G$, and a turn number $t$ given in binary.}{Whether it is possible to complete a perfect packing for $G$ starting from turn $t$.}
\vspace{8pt}
We will analyze the complexity of~\solitairep~next, but before that, let us remark that the definition of the problem does not require the partial filling of $G$ to be achievable in $t-1$ turns. We leave the complexity of~\solitairep~with the additional restriction that $G$ must be achievable in $t-1$ turns as an open problem. That being said, we can present our main complexity result.

\newcommand{\np}{$\mathrm{NP}$}

\begin{theorem}
    \solitairep ~is~\np-complete.\label{thm:np-completeness}
\end{theorem}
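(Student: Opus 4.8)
The plan is to prove \Cref{thm:np-completeness} in two parts: membership in \np~and \np-hardness.

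\textbf{Membership in \np.} This direction should be essentially routine. A certificate is simply the list of rectangles $r_t, r_{t+1}, \ldots, r_K$ placed on turns $t$ through $K$, where $K$ is the final turn. First I would bound $K$: since $G$ has $n^2$ cells and the partial filling occupies some of them, the number of remaining turns is at most $O(n^2)$ (each turn covers at least one cell), and more precisely $K \le \tau(n^2)$-ish, which is $O(n)$; even the crude bound $K \le t + n^2$ is polynomial in the input size because $t$ is given in binary and a valid continuation cannot have $t$ astronomically larger than $n^2$ (if $t > n^2$ the answer is trivially ``no'' since turn $t$ alone needs area $\ge t > n^2$). Each rectangle is described by $O(\log n)$ bits, so the whole certificate has polynomial size. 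Verification checks, turn by turn, that each $r_s$ has area $s$ or $s+1$, fits in the grid, does not overlap previously used cells, and that the union of all placed cells equals the full grid — all doable in polynomial time.

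\textbf{\np-hardness.} This is the main obstacle and where the real work lies. The natural strategy is a reduction from a known \np-complete packing/tiling problem — the most promising candidate is some form of rectangle packing or exact cover, e.g.\ packing a given multiset of rectangles into a box, or a tiling problem known to be \np-complete (packing $1\times k$ polyominoes, or the exact-cover-style problems of the kind used in classic tiling hardness proofs). The difficulty specific to \packit~is that one does not get to choose the rectangle areas freely: on turn $s$ one must place area $s$ or $s+1$, so the ``inventory'' of available rectangles is rigidly determined by which turn we start from and how many expansion turns occur. The key gadget idea is therefore to use the pre-filled region of $G$ to ``lock in'' almost the entire board, leaving a small, carefully shaped empty region together with a starting turn number $t$ chosen so that the forced sequence of areas $t, t+1, \ldots$ (with a controlled number of expansions) corresponds exactly to the pieces of the target packing instance. \Cref{thm:small-gap} and \Cref{thm:large-gap}, or rather the area-counting arguments behind \Cref{lemma:K-def} and \Cref{lemma:gap-def}, are the tools that let us force which turns must be expansion turns and hence pin down the multiset of areas available.

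Concretely, I would (i) pick a source problem whose instances are sets of rectangles to be packed into a rectangular region, ideally one where the rectangles can be taken to have distinct or near-distinct prescribed areas (or pad the instance so they do); (ii) build $G$ as a large grid that is almost entirely pre-filled except for a ``work area'' encoding the target region, possibly with additional ``slack'' empty cells arranged so that the unavoidable small rectangles (areas $1, 2, 3, \dots$ that \packit~forces early) have a forced home that does not interfere with the encoding; (iii) choose the start turn $t$ so that the forced area sequence from turn $t$ onward, after accounting for the exact number of expansion turns dictated by the total empty area via \Cref{lemma:gap-def}, is exactly the list of areas of the pieces in the source instance; and (iv) argue both directions of the equivalence — a solution to the source instance yields a legal completion, and any legal completion, because the areas are forced, must restrict to a solution of the source instance on the work area. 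The main obstacle, and the part deserving the most care, is step (iii) together with the ``slack'' management in step (ii): making the area bookkeeping work out exactly (so that no unwanted flexibility in choosing $s$ versus $s+1$ lets the solver cheat) while keeping the construction polynomial-size and the pre-filled region actually realizable as a $\{0,1\}$-matrix. One clean way to remove residual flexibility is to exploit primality as in \Cref{thm:small-gap}: insert turns whose area would be a prime larger than the grid side unless expanded, thereby forcing specific expansion turns and rigidifying the area multiset.
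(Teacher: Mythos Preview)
Your membership argument is fine and matches the paper. The hardness plan, however, is missing the idea that actually makes the reduction work, and two of your proposed tools turn out to be red herrings. Controlling the \emph{area multiset}---via a clever choice of starting turn $t$ and primality-forced expansions---is not enough: even with the areas pinned down, the solver still has complete freedom in \emph{where} to place each rectangle, so nothing prevents a piece you intended for the ``slack'' region from landing in your ``work area'' and destroying the encoding. You need a mechanism that forces specific turns into specific locations, and your plan offers none. You also never commit to a source problem; ``pad so the areas become near-consecutive'' is not obviously hardness-preserving, and generic rectangle packing is the wrong starting point here.

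The paper's fix is to reduce from (a variant of) \textsc{3-Partition}, which is strongly \np-hard with all elements in the band $(T/4,\,T/2)$, to start at turn $t=1$, and to make every empty cell lie in a $1$-cell-wide vertical slot so that all placed rectangles are forced to be $k\times 1$. These slots come in three flavors: $E$-gadgets (a full $T\times 1$ empty column), $S(\alpha)$-gadgets (a single $\alpha\times 1$ hole), and $D(\alpha)$-gadgets (two $\alpha\times 1$ holes separated by one filled cell). An induction on the turn number shows that the $S$- and $D$-gadgets absorb, in a forced way, every turn whose index is \emph{not} one of the partition elements $\alpha_i$; this leaves exactly the turns $\alpha_1,\ldots,\alpha_n$ to fill the $E$-gadgets, and since each $E$-gadget has height $T$ while each $\alpha_i\in(T/4,\,T/2)$, exactly three of them go into each column---recovering the \textsc{3-Partition} instance. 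Primality plays no role anywhere in the reduction.
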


\begin{proof}
Let $n \times n$ be the the dimensions of $G$. Membership in~\np~is easy to see: the certificate is a description of the turns $t, ..., t+m$, where $m = K(n, n) \leq  n^2$, and it suffices to check that at each turn $t + i$, a rectangle of the appropriate area was placed without overlapping with any of the previously placed rectangles. For hardness, we reduce from  a variant of the well-known 3 partition problem, proven to be NP-hard by Hulett, Will and Woeginger~\cite{hulettMultigraphRealizationsDegree2008}. The overall reduction is inspired by the analysis of Tetris by Breukelaar et al.~\cite{breukelaarTETRISHARDEVEN2004}. Consider the Restricted-3-Partition problem defined as follows.

\csproblem{Restricted-3-Partition}{A set of integers, $\mathcal{A} = \{\alpha_1, \ldots, \alpha_n\}$, with $n$ a multiple of $3$, such that if we define $T := \frac{\sum_{i=1}^n \alpha_i}{(n/3)}$, then $T/4 < \alpha_i < T/2$ for every $i \in [n]$.}{Whether it is possible to partition $\mathcal{A}$ into $n/3$ sets of $3$ elements, all of them having sum exactly $T$.}

Consider now the $4$-Restricted-3-Partition, defined exactly as above but with the additional restriction that all numbers $\alpha_i$ are multiples of $4$. This additional restriction preserves NP-hardness as every 3-partition $P$ defined as
\[
\left\{4\alpha_1, \ldots, 4\alpha_n\right\} \xmapsto{P} \left(\left\{4\alpha_i, 4\alpha_j, 4\alpha_k\right\},
\ldots,\left\{4\alpha_x, 4\alpha_y, 4\alpha_z\right\}\right)
\]
is in a one-to-one correspondence with a 3-partition $P'$ defined as 
\[
\{\alpha_1, \ldots, \alpha_n\} \xmapsto{P'} \left(\left\{\alpha_i, \alpha_j, \alpha_j\right\},
\ldots, \left\{\alpha_x, \alpha_y, \alpha_z\right\}\right).
\]
We can therefore reduce directly from $4$-Restricted-3-Partition.
Let $\mathcal{A}$ be an input instance of $4$-Restricted-3-Partition. We now show how to construct an associated instance of~\solitairep.
First, we will present the required \emph{gadgets}, which are illustrated in~\Cref{fig:gadgets}. 


\paragraph{\textbf{E-gadgets}. An $E$-gadget consists of a $T \times 3$ grid, in which the first and third column are completely filled, whereas the middle column is completely empty (hence $E$(mpty)-gadget). An illustration is presented in~\Cref{fig:s-gadget}.}


\paragraph{\textbf{S-gadgets}. Given an integer $\alpha \geq 1$, an $S(\alpha)$-gadget consists of a $T \times 3$ grid, in which the first and third column are completely filled, whereas only the bottom $T-\alpha$ rows of the middle column are filled. In other words, $S(\alpha)$-gadgets have a \emph{single} ``hole'' of $\alpha \times 1$, hence their name. An illustration is presented in~\Cref{fig:s-gadget}.}


\paragraph{\textbf{D-gadgets}. Given an integer $\alpha \geq 1$, a $D(\alpha)$-gadget consists of a $T \times 3$ grid, in which the first and third column are completely filled, and the middle column is filled only at row $\alpha+1$ and rows $\{2\alpha+2, 2\alpha+3, \ldots, T\}$. In other words, $D(\alpha)$-gadgets have two ``holes'' of $\alpha \times 1$, i.e., a \emph{double} hole, hence their name. An illustration is presented in~\Cref{fig:d-gadget}.}

\begin{figure}[t]
    \begin{subfigure}{0.3\linewidth}
    \centering
    \begin{tikzpicture}
      \foreach \i in {0,...,9} { 
            \onesquareTwo{ 0.0 }{ 0.0  + \i*0.6}{\colc}{}
      }

      \foreach \i in {0,...,9} { 
            \onesquareTwo{ 0.6 }{ 0.0  + \i*0.6}{\colGG}{}
      }

      \foreach \i in {0,...,9} { 
            \onesquareTwo{ 1.2 }{ 0.0  + \i*0.6}{\colc}{}
      }
        
    \end{tikzpicture}
    \label{fig:e-gadget}
    \caption{An $E$-gadget.}
    \end{subfigure}
    \begin{subfigure}{0.3\linewidth}
    \centering
    \begin{tikzpicture}
      \foreach \i in {0,...,9} { 
            \onesquareTwo{ 0.0 }{ 0.0  + \i*0.6}{\cole}{}
      }

      \foreach \i in {0,...,5} { 
            \onesquareTwo{ 0.6 }{ 0.0  + \i*0.6}{\cole}{}
      }

      \foreach \i in {6,...,9} { 
            \onesquareTwo{ 0.6 }{ 0.0  + \i*0.6}{\colGG}{}
      }

      \foreach \i in {0,...,9} { 
            \onesquareTwo{ 1.2 }{ 0.0  + \i*0.6}{\cole}{}
      }
        
    \end{tikzpicture}
    \caption{An $S(4)$-gadget.}
    \label{fig:s-gadget}
\end{subfigure}
\begin{subfigure}{0.3\linewidth}
    \centering
    \begin{tikzpicture}
      \foreach \i in {0,...,9} { 
            \onesquareTwo{ 0.0 }{ 0.0  + \i*0.6}{\colb}{}
      }

     \onesquareTwo{ 0.6 }{ 0.0  + 6*0.6}{\colb}{}

      \foreach \i in {0,...,4} { 
            \onesquareTwo{ 0.6 }{ 0.0  + \i*0.6}{\colb}{}
      }

    \onesquareTwo{ 0.6 }{ 0.0  + 3*0.6}{\colGG}{}
    \onesquareTwo{ 0.6 }{ 0.0  + 4*0.6}{\colGG}{}
    \onesquareTwo{ 0.6 }{ 0.0  + 5*0.6}{\colGG}{}
     \onesquareTwo{ 0.6 }{ 0.0  + 7*0.6}{\colGG}{}
     \onesquareTwo{ 0.6 }{ 0.0  + 8*0.6}{\colGG}{}
     \onesquareTwo{ 0.6 }{ 0.0  + 9*0.6}{\colGG}{}

      \foreach \i in {0,...,9} { 
            \onesquareTwo{ 1.2 }{ 0.0  + \i*0.6}{\colb}{}
      }
        
    \end{tikzpicture}
    \caption{A $D(3)$-gadget.}
    \label{fig:d-gadget}
\end{subfigure}
\caption{Illustration of the gadgets for $T = 10$.}
\label{fig:gadgets}
\end{figure}


With these gadgets, we can now construct a $(T+n)\times (T+n)$ grid as follows. First, horizontally concatenate exactly $n/3$ identical $E$-gadgets. Next, concatenate an $S(1)$-gadget to the right of the current construction. Then, for every odd value $m$ such that  $3 \leq m  <\max(\mathcal{A})$,  concatenate a $D(m)$-gadget to the right of the current construction if $m-1 \not\in \mathcal{A}$, and instead an $S(m+1)$-gadget to the right of the current construction otherwise.

\newcommand{\wdd}{0.25}
\begin{figure}[t]
    \centering
    \begin{tikzpicture}
        \draw[black] (0, 0) -- (0, 8) -- (8, 8) -- (8, 0) -- cycle;

        \draw[black, fill=\colc] (0, 0) -- (0, 8) --
        (\wdd, 8) -- (\wdd, 0) -- cycle;

         \draw[black, fill=\colc] (0 + 2*\wdd, 0) -- (0+ 2*\wdd, 8) --
        (\wdd+ 2*\wdd, 8) -- (\wdd+ 2*\wdd, 0) -- cycle;

         \draw[black, fill=\colc] (0+ 3*\wdd, 0) -- (0+ 3*\wdd, 8) --
        (\wdd+ 3*\wdd, 8) -- (\wdd+ 3*\wdd, 0) -- cycle;

         \draw[black, fill=\colc] (0 + 2*\wdd+ 3*\wdd, 0) -- (0+ 2*\wdd+ 3*\wdd, 8) --
        (\wdd+ 2*\wdd+ 3*\wdd, 8) -- (\wdd+ 2*\wdd+ 3*\wdd, 0) -- cycle;

         \node (sd) at (2, 4) {$\ldots$};

        \draw[black, fill=\colc] (10*\wdd, 0) -- (10*\wdd, 8) --
        (11*\wdd, 8) -- (11*\wdd, 0) -- cycle;       

        \draw[black, fill=\colc] (12*\wdd, 0) -- (12*\wdd, 8) --
        (13*\wdd, 8) -- (13*\wdd, 0) -- cycle;

         \draw[black, fill=\colb] (13*\wdd, 0) -- (13*\wdd, 8) --
        (14*\wdd, 8) -- (14*\wdd, 0) -- cycle;
        \draw[black, fill=\colb] (15*\wdd, 0) -- (15*\wdd, 8) --
        (16*\wdd, 8) -- (16*\wdd, 0) -- cycle;
         \draw[black, fill=\colb] (14*\wdd, 6) -- (14*\wdd, 6.25) --
        (15*\wdd, 6.25) -- (15*\wdd, 6) -- cycle;
        \draw[black, fill=\colb] (14*\wdd, 0) -- (14*\wdd, 4) --
        (15*\wdd, 4) -- (15*\wdd, 0) -- cycle;
        
        \draw[black, fill=\colb] (16*\wdd, 0) -- (16*\wdd, 8) --
        (17*\wdd, 8) -- (17*\wdd, 0) -- cycle;
        \draw[black, fill=\colb] (18*\wdd, 0) -- (18*\wdd, 8) --
        (19*\wdd, 8) -- (19*\wdd, 0) -- cycle;
         \draw[black, fill=\colb] (17*\wdd, 5.5) -- (17*\wdd, 5.75) --
        (18*\wdd, 5.75) -- (18*\wdd, 5.5) -- cycle;
        \draw[black, fill=\colb] (17*\wdd, 0) -- (17*\wdd, 3.25) --
        (18*\wdd, 3.25) -- (18*\wdd, 0) -- cycle;

        \draw[black, fill=\cole] (19*\wdd, 0) -- (19*\wdd, 8) --
        (20*\wdd, 8) -- (20*\wdd, 0) -- cycle;
        \draw[black, fill=\cole] (21*\wdd, 0) -- (21*\wdd, 8) --
        (22*\wdd, 8) -- (22*\wdd, 0) -- cycle;
        \draw[black, fill=\cole] (20*\wdd, 0) -- (20*\wdd, 6.5) --
        (21*\wdd, 6.5) -- (21*\wdd, 0) -- cycle;

        \draw[black, fill=\colb] (22*\wdd, 0) -- (22*\wdd, 8) --
        (23*\wdd, 8) -- (23*\wdd, 0) -- cycle;
        \draw[black, fill=\colb] (24*\wdd, 0) -- (24*\wdd, 8) --
        (25*\wdd, 8) -- (25*\wdd, 0) -- cycle;
         \draw[black, fill=\colb] (23*\wdd, 6.125) -- (23*\wdd, 6.375) --
        (24*\wdd, 6.375) -- (24*\wdd, 6.125) -- cycle;
        \draw[black, fill=\colb] (23*\wdd, 0) -- (23*\wdd, 4.25) --
        (24*\wdd, 4.25) -- (24*\wdd, 0) -- cycle;

        \draw[black, fill=\cole] (27*\wdd, 0) -- (27*\wdd, 8) --
        (28*\wdd, 8) -- (28*\wdd, 0) -- cycle;
        \draw[black, fill=\cole] (29*\wdd, 0) -- (29*\wdd, 8) --
        (30*\wdd, 8) -- (30*\wdd, 0) -- cycle;
        \draw[black, fill=\cole] (28*\wdd, 0) -- (28*\wdd, 7.25) --
        (29*\wdd, 7.25) -- (29*\wdd, 0) -- cycle;

        \draw[black, fill=\colt] (30*\wdd, 0) -- (30*\wdd, 8) --
        (11.5, 8) -- (11.5, 0) -- cycle;

        \draw[black, fill=\colt] (0, 0) -- (0, -3) --
        (11.5, -3) -- (11.5, 0) -- cycle;
        
        \node (sdotr2) at (25.9/4, 4) {\tiny $\ldots$};
    
        \node (yl) at (-0.5, 4) {$T$};
        \node (yl2) at (-0.5, -1.5) {$n$};
        \node (xl) at (4, -3.5) {$T+n$};
    \end{tikzpicture}
    \caption{Illustration of the construction of $G_\mathcal{A}$ for~\Cref{thm:np-completeness}. Note that $n$ could be larger than $T$, and thus this figure is not necessarily in scale.}
    \label{fig:np-completeness}
\end{figure}

Afterwards, if the resulting grid has length $T \times T'$, we complete a $T \times (T+n)$ grid by concatenating a $T \times (T+n - T')$ completely filled grid to the right of the current construction. This is well-defined, meaning that $T' < T+n$, as we show next. First, consider that, as each gadget uses exactly $3$ columns, we have
\begin{align*}
    T' &= 3\cdot n/3 + 3 \cdot \left|\big\{ 3 \leq m < \max(\mathcal{A}) \mid m \text{ is odd}\big\}\right|\\
    &\leq n + 3\left\lceil \frac{\max(\mathcal{A})-3}{2} \right\rceil
    < n + 3\frac{\max(\mathcal{A})}{2}. 
\end{align*}

Next, consider that 
\[
T = \left(\sum_{\alpha \in \mathcal{A}} \alpha \right)/(n/3) \leq 3\max(\mathcal{A}).
\]

Then, as $\max(\mathcal{A})\leq T/2$ by the definition of $4$-Restricted-3-Partition, we have 
\[
T' \leq n + 3  \cdot \frac{\max(\mathcal{A})}{2} \leq n + 3 \cdot \frac{T}{4} <T + n.
\]
Finally, to go from the resulting $T \times (T + n)$ grid to a $(T+n) \times (T+n)$ grid it suffices to concatenate a completely filled $n \times (T+n)$ grid at the bottom of the previous grid.
This construction is illustrated in~\Cref{fig:np-completeness}. We are now ready to prove the correctness of our reduction. Let $G_\mathcal{A}$ be the $(T+n) \times (T+n)$ grid constructed by the above process.
\begin{lemma}
    The instance $(G_\mathcal{A}, 1)$ is a Yes-instance for~\solitairep~if and only if~$\mathcal{A}$ is a Yes-instance for 4-Restricted-3-Partition.
    \label{lemma:reduction}
\end{lemma}
\begin{proof}
($\impliedby$) Let us start with the backward direction since it is simpler. Assume there is a solution to the partition problem with sets $S_1, \ldots, S_{n/3}$, where each set has exactly $3$ elements and its sum is exactly $T$. Then, we can complete a perfect packing of $G$ as follows. On each turn $1 \leq t \leq \max(\mathcal{A})$:
\begin{itemize}
    \item[Case \textbf{I})] If $t \in \mathcal{A}$, then let $i$ be the index such that $t = \alpha_i$, and $j$ be the index of the set $S_j$ such that $\alpha_i \in S_j$. Then, on this turn we can place a rectangle of dimensions $t \times 1$ into the $j$-th $E$-gadget of $G_\mathcal{A}$. 
    \item[Case \textbf{II})] If $t = 4k$ for some positive integer $k$ but $t \not\in \mathcal{A}$, then by construction there is a $D(t+1)$-gadget, which can be filled by placing a $(t+1) \times 1$ rectangle on this turn, and a $(t+1) \times 1$ rectangle on the next turn.
    \item [Case \textbf{III})] If $t = 4k+1$ and $t-1 \in \mathcal{A}$, then by construction there is an $S(t+1)$-gadget, which can be filled by placing a $(t+1) \times 1$ rectangle on this turn. 
    
    \item [Case \textbf{IV})] If $t = 4k+1$ for some integer $k$, and $t-1 \not\in \mathcal{A}$, then this turn has been covered in Case \textbf{II}).
    
    \item [Case \textbf{V})] If $t = 4k + 2$ for some integer $k$, then by by construction there is a $D(t+1)$-gadget, which can be filled by placing a $(t+1) \times 1$ rectangle on this turn, and a $(t+1) \times 1$ rectangle on the next turn.

    \item [Case \textbf{VI})] If $t = 4k+3$ then this turn has been covered in Case \textbf{V}).
\end{itemize}
As a result of the turns of Case \textbf{I}), every $E$-gadget will be completely filled since by definition, if $\alpha_i, \alpha_k, \alpha_\ell \in S_j$, then $\alpha_i + \alpha_k + \alpha_\ell = T$. As there are exactly $n/3$ identical $E$-gadgets in $G_\mathcal{A}$, they will all be filled. Note as well that the gadgets used in every case are different. In particular, the only $S$-gadgets in the construction are for $t+1 = 4k+2$ with $t-1 \in \mathcal{A}$, which are all used by Case \textbf{III}). Similarly, all $D(m)$-gadgets for $m = 4k+1$ for some integer $k$ are used by Case \textbf{II}), whereas all $D(m)$-gadgets for $m = 4k+3$ are used by Case \textbf{V}). Given all gadgets are perfectly filled up, we have a perfect packing of $G_\mathcal{A}$.

($\implies$) For the forward direction, assume it is possible to perfectly pack the grid $G_\mathcal{A}$ starting from turn $1$. Let $G^P_\mathcal{A}$ be any perfect packing completing $G_\mathcal{A}$. Note immediately that by construction, every rectangle placed in $G^P_\mathcal{A}$ from turn $1$ onward must have dimension $t \times 1$ for some positive integer $t$. Intuitively, we will now prove that the choices made in the backward direction of the proof are forced.

\begin{definition}
    For any turn $t \geq 1$, we say the rectangle placed in $G^P_\mathcal{A}$ on turn $t$ is \emph{proper} if either 
    \begin{enumerate}
        \item $t = 1$, and the rectangle placed  in $G^P_\mathcal{A}$ on this turn was a $1 \times 1$ rectangle placed in the only $S(1)$-gadget of $G_\mathcal{A}$.
        \item $t > 1$ is odd, and $t-1 \in \mathcal{A}$, and the rectangle placed  in $G^P_\mathcal{A}$ on this turn was a $(t+1) \times 1$ placed in the only $S(t+1)$-gadget of $G_\mathcal{A}$.
        \item $t > 1$ is odd and $t-1 \not\in\mathcal{A}$, and the rectangle placed  in $G^P_\mathcal{A}$ on this turn was a $t \times 1$ placed in one of the two spaces of the only $D(t)$-gadget of $G_\mathcal{A}$.
        \item $t \in \mathcal{A}$, and the rectangle placed  in $G^P_\mathcal{A}$ on this turn was placed in one of the $E$-gadgets. 
        \item $t$ is even but $t \not\in \mathcal{A}$, and the rectangle placed  in $G^P_\mathcal{A}$ on this turn was a $(t+1) \times 1$ placed in one of the two spaces of the only $D(t+1)$-gadget of $G_\mathcal{A}$.
    \end{enumerate}
\end{definition}

\begin{claim}
    Every turn $t \geq 1$ where a rectangle was placed in $G^P_\mathcal{A}$ must have been proper. 
    \label{claim:proper}
\end{claim}
\begin{proof}[Proof of~\Cref{claim:proper}]
    We prove the claim by induction on $t$. The base case is $t=1$, for which a single $S(1)$-gadget exists in the construction, and given that the $1 \times 1$ empty space in this gadget must be filled in $G^P_\mathcal{A}$, the only turn on which it can be filled is turn $1$. Therefore the base case works. For the inductive case, 
    assume the claim holds up to $t$ and let us show it holds for $t+1$. 
    
    \begin{itemize}
        \item If $t+1$ is odd and $t \in \mathcal{A}$, then we claim the rectangle placed on turn $t+1$ must have been a $(t+2) \times 1$ rectangle in the only $S(t+2)$-gadget of $G_\mathcal{A}$.  Indeed, if this were not the case, said gadget could only have been filled by a $(t+2) \times 1$ rectangle placed on turn $(t+2)$, since all previous turns have been proper and thus not placed anything in the $S(t+2)$-gadget. However, given there are two empty spaces of size $(t+3)$ into the only $D(t+3)$-gadget of $G_\mathcal{A}$ (which must exist since $t \in \mathcal{A} \implies t+2 \not \in \mathcal{A}$ as all elements of  $\mathcal{A}$ are multiples of $4$), and no previous turns could have placed anything into them as they are proper by inductive hypothesis, then we conclude that on turn $(t+2)$ a rectangle of size $(t+3)$ must have been placed into the only $D(t+3)$-gadget of $G_\mathcal{A}$.

    \item If $t+1$ is odd and $t \not\in\mathcal{A}$, then given all the previous turns have been proper, it must be that the only $D(t+1)$-gadget of $G_\mathcal{A}$ has only received a $(t+1) \times 1$ rectangle placed on turn $t$, according to (5) in the definition of proper turn. Therefore, a single $(t+1) \times 1$ empty space remains in the only $D(t+1)$-gadget of $G_\mathcal{A}$, and it must be that is filled on this turn, as any posterior turns will have rectangles of area at least $t+2$.

    \item If $t+1$ is even but $t+1 \not\in \mathcal{A}$, then given all turns so far have been proper, there are two empty $(t+2) \times 1$ spaces in the only $D(t+2)$-gadget of $G_\mathcal{A}$, and given none can be filled after turn $t+3$, and at most one can be filled in turn $t+2$, we conclude that turn $t+1$ must fill one.
    \item If $t+1 \in \mathcal{A}$, and this turn were to be improper, then the rectangle placed on this turn must be placed either in an $S(t')$-gadget or in a $D(t')$-gadget.
    
    In either case we will reach a contradiction.
    Note first that $t' > t+2$: in the construction of $G_\mathcal{A}$, as $t$ is odd and $t-1 \not \in \mathcal{A}$, when $m = t$ a $D(t)$-gadget was created, and the next gadget created is a $S(t+3)$-gadget when $m = t+2$, since $m-1 \in \mathcal{A}$.
    Next, note that the remaining empty space on the $S(t')$-gadget or the $D(t')$-gadget partially filled on turn $t+1$ must be at least $t' - (t+2) > 0$. If $t' - (t+2) < t+2$, then that remaining empty space can never be filled in posterior turns, where all rectangles have area at least $t+2$, a contradiction. Otherwise, $t' - (t+2) > t+1$, meaning that $t' > 2t+3$. Because an $S(t')$-gadget or a $D(t')$-gadget exists, we deduce from the construction that $t' \leq \max(\mathcal{A})$. This implies that \[
    \max(\mathcal{A}) > t' - 1 > 2t + 2 = 2(t+1),
    \]
    meaning that two elements of $\mathcal{A}$, namely \[
    \alpha_i:= \max(\mathcal{A}), \quad \alpha_j := t+1, \]
 hold $\alpha_i > 2\alpha_j$.
    But by definition of $4$-Restricted-3-Partition that would imply the following contradiction:
     \[
     T/4 < \alpha_j < \alpha_i/2 < (T/2)/2 = T/4.
     \]\qedhere
     \end{itemize}
\end{proof}

By~\Cref{claim:proper}, we have that for every  $\alpha_t \in \mathcal{A}$, a rectangle of area $\alpha_t$ has been placed inside an $E$-gadget. Given that $T/2 < \alpha_t < T/4$ for every $t$, there must be exactly $3$ rectangles placed inside every $E$-gadget. Let $\alpha_i^{(1)}, \alpha_i^{(2)}, \alpha_i^{(3)}$ be the areas of the three rectangles placed inside the $i$-th $E$-gadget. As for every $i$, by hypothesis, the $i$-th
$E$-gadget is perfectly filled and had $t$ empty cells to be filled, we conclude that that $\alpha_i^{(1)} +\alpha_i^{(2)} + \alpha_i^{(3)}= T$, from where it follows that $\mathcal{A}$ is a Yes-instance to the $4$-Restricted-3-Partition problem. This concludes the proof of~\Cref{lemma:reduction}. 


\end{proof}

Given the reduction presented above can clearly be carried out in polynomial time, we conclude hardness from the correctness proved in~\Cref{lemma:reduction}, and consequently this finishes the entire proof of~\Cref{thm:np-completeness}.


\end{proof}

\section{Computing Perfect \packit~games}
\label{sec:computation}
Even though~\Cref{thm:np-completeness} does not directly imply that it is hard to find perfect packings for an $n \times n$ grid (or to decide whether such a packing exist), it arguably gives evidence for this being a hard combinatorial challenge. 

In many combinatorial problems SAT-solving
can dramatically outperform backtracking approaches. This also happens to be the case for computing perfect games of~\packit, where even after several optimizations, a backtracking approach only allowed us to find perfect packings up to $n=20$. In contrast, by using a novel SAT encoding technique we were able to find perfect packings up to $n=50$ in under 24 hours of computation. 
We assume a basic familiarity with propositional logic and SAT-solving, but
for a general reference we direct the reader to~\cite{Biere2021-od}. 
As in~\Cref{sec:results}, we divide the problem into two stages: (i) finding a set of rectangles $(h_t, v_t)$ such that 
\begin{itemize}
    \item Their total area is $n^2$, meaning that $\sum_{t} h_t \cdot v_t = n^2$.
    \item The $t$-th rectangle has area $t$ or $t+1$, meaning that $h_t \cdot v_t \in \{t, t+1\}$ for every $t$.
    \item All rectangles fit into the $n \times n$ grid, meaning that $\max(h_t, v_t) \leq n$.
\end{itemize}
and (ii), packing the rectangles obtained in the previous stage without overlaps.
Note that due to the area condition, if a valid rectangle selection is packed without overlapping, then they must cover the entire $n \times n$ grid. 

For stage (i), we use a pseudo-polynomial dynamic programming approach, similar to the one used for the standard subset sum problem. For stage (ii) we use a sophisticated SAT encoding that uses only $O(n^3)$ many clauses as opposed to the naive $O(n^4)$ encoding. 

%

\subsection{Rectangle Selection through Dynamic Programming} For stage (i), we use a pseudo-polynomial dynamic programming approach, similar to the one used for standard subset sum. Given integer values $(a, n)$, let $R(a, n)$ be a function returning $1$ if there is a rectangle $(h, v)$ such that $h \cdot v = a$ and $\max(h, v) \leq n$, and $0$ if no such rectangle exists. Clearly $R$ can be computed efficiently. Now, let us say that
a sequence of $t$ rectangles $(h_i, v_i)$, whose total area adds up to $T$, and such that $\max(h_i, v_i) \leq n$ is a valid $(t, T, n)$-rectangle-selection. Let us abbreviate $K(n, n)$ as $K$ from now on. Our goal is to compute a valid $(K, n^2, n)$-rectangle-selection. Let us define $\textsf{dp}(t, T)$ as 
\[
\textsf{dp}(t, T) := \begin{cases}
    1 & \text{if there is a valid $(t, T, n)$-rectangle-selection,}\\
    0 & \text{otherwise.}
\end{cases}
\]
We can compute $\textsf{dp}(t, T)$ efficiently through the following recursive equation:

\[
\textsf{dp}(t, T) = \begin{cases}
    1 & \text{if $t = T = 0$,}\\
    0 & \text{else if $t = 0 \text { or } T < 0$,}\\
    \max\big\{ \textsf{dp}(t-1, T-t) \cdot R(t, n) , & \text{ } \\ 
    \quad \; \; \; \; \; \, \textsf{dp}(t-1, T-t-1) \cdot R(t+1, n) \big\} & \text{otherwise.} 
\end{cases}
\]
By pre-computing the value of the function $R$, and then filling up a table up to $\textsf{dp}(K, n^2)$, this approach runs in time $O(n^2 \cdot K) = O(n^{3})$\footnote{The fact that $K(n, n) = O(n)$ is a direct consequence of footnote 1.}, which is almost instantaneous for the values of $n$ we will consider. Naturally, from the table $\textsf{dp}(t, T)$ we can easily reconstruct a valid $(K, n^2, n)$-rectangle-selection. 

\subsection{An efficient SAT encoding}
Assume now we have a valid $(K, n^2, n)$-rectangle-selection $L = \left( (h_1, v_1), \ldots, (h_K, v_K) \right)$. How can we pack these selection of rectangles into the $n \times n$ grid? This subsection presents an efficient encoding for this particular case of the NP-hard problem of rectangle packing~\cite[Theorem 6.19]{ComputationalIntractabilityGuideb}. 
We assume that each rectangle in the selection might need to be rotated by $90^\circ$,  as the dynamic programming generating the rectangle selection was not concerned with distinctions between the horizontal and vertical dimension.
A naive encoding can be designed as follows. For every index $t \in [K]$, create a boolean variable $r_t$ corresponding to whether the $t$-th rectangle in the selection $L$ will be rotated or not. Then, for any position $(i, j) \in \{0, \ldots, n-1 \} \times \{0, \ldots, n-1\}$, and index $t \in [K]$, create a boolean variable $p_{t, (i, j)}$ stating that the $t$-th rectangle will be placed with its top-left corner in position $(i, j)$. Then, to force each rectangle to be placed somewhere, one can add clauses 
\[
\bigvee_{(i, j) \text{ s.t.  rectangle $t$ would fit if placed at $(i, j)$}} p_{t, {i, j}}, \quad \forall t \in [K].
\]
In order to avoid overlaps, we would now need binary clauses of the form
\[
 \neg p_{t_1, (i, j)} \lor \neg p_{t_2, (a, b)}
\]
for every pair $t_1 \neq t_2$ and every quadruple of indices $i,j, a, b$ such that $t_1$ and $t_2$ would overlap if placed at $(i, j)$ and $(a, b)$ respectively. For the sake of simplicity, we are omitting the incorporation of the $r_t$ variables, as regardless of them, this encoding results in $O(n^2 \cdot K^2) = O(n^4)$ many clauses, which is prohibitively large even for very small values of $n$. 

Instead, we will build an encoding using only $O(n^3)$ many clauses. The key insight is that we can decompose the problem into a \emph{horizontal component} and a \emph{vertical component}, that can be treated separately and yet are connected in a last step where we establish that pairs of rectangles can overlap in their $x$-coordinates or in their $y$-coordinates, but not both. We proceed to present the encoding, although we remark immediately that despite its conceptual simplicity, the details are quite intricate, and we plan to write a subsequent paper exploring the details of this encoding.

For each rectangle index $t \in [K]$, we will create variables 
\(
x_{t, i}
\) for each $0 \leq i < n$, and similarly, \(
y_{t, j}
\) for each $0 \leq j < n$. Intuitively, $y_{t, j}$ represents that some cell of the $t$-th rectangle will use the $i$-th  row of the grid, and $x_{t, i}$ that some cell will use the $j$-th column of the grid.

For any $t \in [K]$, let us denote by $\vec{x}_t$ the vector of boolean variables $(x_{t, 0}, \ldots, x_{t, n-1})$, and analogously for $\vec{y}_t$.
The high level requirement on the $x_{t, i}, y_{t, j}$ variables will be that if the $t$-th rectangle had dimensions $(h_t, v_t)$, then $\vec{x}_t$ must be a vector of $0$s\footnote{We use the traditional assumption of boolean variables taking values in $\{0, 1\}$.}, except for a subsegment of exactly $v_t$ consecutive values being $1$s, and $\vec{y}_t$ must be a vector of $0$s except for a subsegment of exactly $h_t$ consecutive $1$s. Depending on the value of the variables $r_t$, these roles will be inverted. Interestingly, these requirements can be implemented with $O(n)$ clauses and $O(n)$ extra variables for each index $t \in [K]$ as we show next.

Consider without loss of generality that we focus on the horizontal dimension of the problem, as the vertical direction is analogous. Then, create variables $m^x_{t, i}$ and $M^x_{t, i}$ for every $t \in [K]$ and $0 \leq i < n$. The clauses involving $m^x_{t, i}$ and $M^x_{t, i}$ will be such that if rectangle $t$ uses columns $\ell, \ell+1, \ldots, r$, then we will have
\newcommand{\mvecxt}{\overrightarrow{m^x_t}}
\newcommand{\Mvecxt}{\overrightarrow{M^x_t}}

\[
\mvecxt = (1, \ldots, 1, \ldots, 1, 0, \ldots, 0),
\]
where the prefix of $1s$ in $\mvecxt$ has length $r$, and
\[
\Mvecxt = (0, \ldots, 0, 1,\ldots, 1, \ldots, 1),
\]
where the first one in $\Mvecxt$ appears in position $\ell$. The idea now is that $\vec{x}_t$ should have $1$s exactly in the positions where both $\mvecxt$ and $\Mvecxt$ have $1$s, as illustrated in~\Cref{fig:encoding-illustration}. 

The main question is how to encode this desired behavior of the variables $\mvecxt$ and $\Mvecxt$ can be achieved, as the values of $\vec{x}_t$ can be established by simply adding the clausal form of
\[
    x_{t, i} \iff \left(m^{x}_{t, i} \land M^{x}_{t, i}\right),
\]
for every $t \in [K], i \in \{0, \ldots, n-1\}$. In order to obtain the desired behavior for the variables $\mvecxt$ and $\Mvecxt$, we can use the following three ideas:
\begin{enumerate}
    \item As every $1$ in $\Mvecxt$ is followed by a $1$, it suffices to add the implications $M^{x}_{t, i} \implies M^{x}_{t, i+1}$ for every $i \in \{0, \ldots, n-2\}$.
    \item As every $1$ in $\mvecxt$ is preceded by a $1$, it suffices to add the implications $m^{x}_{t, i} \implies M^{x}_{t, i-1}$ for every $i \in \{1, \ldots, n-1\}$.
    \item Based on the previous two steps, it must be the case that in any satisfying assignment to the formula created thus far there is a value $R \in \{0, \ldots, n-1\}$ such that 
    \[ m^x_{t, i} \iff i \leq R,\] and a value $L \in \{0, \ldots, n-1\}$ such that \[
    M^x_{t, i} \iff i \geq L.
    \]
    In order to enforce that there is exactly $h_t$ positions $i$ in which both $m^x_{t, i}$ and $M^{x}_{t, i}$ have 1s, it suffices to enforce that $R - L = h_t-1$. To achieve this, for each $i$, add in clausal form the implications:
    \[
    m^x_{t, i} \implies \neg M^x_{t, i - h_t}, \quad \forall i \text{ s.t. } i - h_t \geq 0,
    \]
    as well as 
    \[
    M^x_{t, i} \implies \neg m^x_{t, i + h_t}, \quad \forall i \text{ s.t. } i + h_t < n.
    \]
    Now observe that from the previous implications one derives the following chain of reasoning:
    \begin{align*}
        i \leq R &\iff m^x_{t, i}\\ 
        &\iff \neg M^x_{t, i-h_t}\\
        &\iff i - h_t < L\\
        & \iff i < L + h_t,
    \end{align*}
from where $L + h_t < R$ and similarly,
  \begin{align*}
        i \geq L &\iff M^x_{t, i}\\ 
        &\iff \neg m^x_{t, i+h_t}\\
        &\iff i + h_t > R\\
        & \iff i > R - h_t,
    \end{align*}
from where $L > R - h_t$. As all values are integers, we have 
\begin{align*}
    (L + h_t < R) \land (L > R - h_t) &\iff (L + h_t \leq R+1) \land (L-1 \geq R - h_t) \\
    &\iff (h_t - 1\leq R-L) \land ( h_t- 1 \geq R - L )\\
    & \iff R - L = h_t -1,
\end{align*}
thus implying the desired result.
    
\end{enumerate}

\begin{figure}[ht]
    \centering
    \begin{tikzpicture}
        \onesquare{ 0.0 }{ 3.75 }{\colGG}{\;}
\onesquare{ 0.75 }{ 3.75 }{\colGG}{\;}
\onesquare{ 1.5 }{ 3.75 }{\colGG}{\;}
\onesquare{ 2.25 }{ 3.75 }{\colGG}{\;}
\onesquare{ 3.0 }{ 3.75 }{\colGG}{\;}
\onesquare{ 3.75 }{ 3.75 }{\colGG}{\;}
\onesquare{ 0.0 }{ 3.0 }{\colGG}{\;}
\onesquare{ 0.75 }{ 3.0 }{\colb}{\numb}
\onesquare{ 1.5 }{ 3.0 }{\colb}{\numb}
\onesquare{ 2.25 }{ 3.0 }{\colb}{\numb}
\onesquare{ 3.0 }{ 3.0 }{\colGG}{\;}
\onesquare{ 3.75 }{ 3.0 }{\colGG}{\;}

\node  (mlabel) at (-1, 2) {$\overrightarrow{m^x_2}$};

\node (mvals) at (1.85, 2) {$\mathbf{1}, \;\;\;\,  \mathbf{1}, \;\;\;\,  \mathbf{1}, \;\;\;\,  \mathbf{1}, \;\;\;\, 0, \;\;\;\, 0$};

\node  (xlabel) at (-1, 1) {$\overrightarrow{x_2}$};

\node (xvals) at (1.85, 1) {$0, \;\;\;\,  \mathbf{1}, \;\;\;\,  \mathbf{1}, \;\;\;\,  \mathbf{1}, \;\;\;\, 0, \;\;\;\, 0$};

\node  (Mlabel) at (-1, 4.75) {$\overrightarrow{M^x_2}$};

\node (Mvals) at (1.85, 4.75) {$0, \;\;\;\,  \mathbf{1}, \;\;\;\,  \mathbf{1}, \;\;\;\,  \mathbf{1}, \;\;\;\, \mathbf{1}, \;\;\;\, \mathbf{1}$};

    \end{tikzpicture}
    \caption{Example for the auxiliary variables in the efficient SAT encoding, where $t=2$, and a rectangle $(h_2 := 3, v_2 := 1)$ is considered. }
    \label{fig:encoding-illustration}
\end{figure}


For the variables $\vec{y}_t$ we proceed analogously. Then, we complete the encoding by forbidding overlaps as follows.
For each pair $t_1, t_2$ of rectangles, create an auxiliary variable $c_{t_1, t_2}$. Intuitively, we will use $c_{t_1, t_2}$ to force $t_1$ and $t_2$ to not overlap. Then, for each pair of rectangles $t_1, t_2$, create clauses representing the following implications:
\begin{equation}
   \left( \bigvee_{i = 0}^{n-1} \left(x_{t_1, i} \land x_{t_2, i} \right) \right) \implies c_{t_1, t_2},\label{eq:x-inter}
\end{equation}
\begin{equation}
    \left( \bigvee_{i = 0}^{n-1} \left(y_{t_1, i} \land y_{t_2, i} \right) \right) \implies \neg c_{t_1, t_2},\label{eq:y-inter}
\end{equation}
which requires $O(n)$ many clauses for each pair of rectangles. This amounts to a total of $O(nK^2) = O(n^3)$ clauses in total. Correctness can be argued as follows.~\Cref{eq:x-inter} implies that if rectangles $t_1$ and $t_2$ intersect on an $x$-coordinate, then $c_{t_1, t_2}$ must be true, whereas~\Cref{eq:y-inter} implies that if they intersect on a $y$-coordinate, then $c_{t_1, t_2}$ must be false. As $c_{t_1, t_2}$ cannot be simultaneously true and false, it cannot be the case that $t_1$ and $t_2$ intersect on both an $x$ and a $y$ coordinate, which implies they cannot overlap. This concludes the general description of the encoding.

\subsection{Computational Results}
All experiments have been run on a personal computer with the following specifications:

\begin{itemize}
    \item MacBook Pro M1, 2020, running Sonoma 14.3
    \item 16GB of RAM
    \item 8 cores (but all experiments were run in a single thread).
\end{itemize}

In terms of software, we experimented with different SAT-solvers, and obtained the best results using the award-winning solver~\textsf{Kissat}~\cite{biereCaDiCaLKissatParacooba2020}.
We tested every value of $n$ between $5$ and $50$ and such that neither~\Cref{thm:small-gap} nor~\Cref{thm:large-gap} applies, and for every value we were able to find a perfect game of~\packit~in under 24 hours. For each such value, we used the dynamic programming approach to generate a valid selection of rectangles, and simply used the first one obtained. Given the number of valid selections of rectangles is likely exponential in $n$, it could be that some valid selections are significantly easier to pack than others. The fact that we obtained perfect packings simply using the first valid rectangle selection obtained via dynamic programming confirms the robustness of the SAT approach.

Detailed results are presented in~\Cref{tab:sat-results}.

\begin{table}[ht!]
\caption{Computational results for $n \in \{5, \ldots, 50\}$. Perfect packings for $n \in \{1, \ldots, 4\}$ are trivial.}
    \centering
    \scalebox{0.85}{
    \begin{tabular}{rrrr}
    \toprule
$n$ & \#vars & \#clauses & SAT runtime \\
\midrule
5 & 141 & 424 & 0.0s \\ 
(\Cref{thm:small-gap} applies)  6 &  - &  - & - \\ 
7 & 297 & 1101 & 0.0s \\ 
8 & 375 & 1482 & 0.0s \\ 
9 & 510 & 2228 & 0.02s \\ 
10 & 611 & 2797 & 0.02s \\ 
11 & 780 & 3921 & 0.02s \\ 
12 & 904 & 4732 & 0.03s \\ 
13 & 1037 & 5673 & 0.19s \\ 
14 & 1254 & 7375 & 0.16s \\ 
15 & 1410 & 8584 & 0.04s \\ 
16 & 1661 & 10838 & 0.56s \\ 
17 & 1840 & 12397 & 0.20s \\ 
(\Cref{thm:large-gap} applies) 18 & - & - & - \\ 
19 & 2327 & 17184 & 0.20s \\ 
20 & 2538 & 19339 & 2.47s \\ 
21 & 2871 & 23037 & 2.08s \\ 
22 & 3105 & 25582 & 2.04s \\ 
(\Cref{thm:small-gap} applies) 23 & - & - & - \\ 
24 & 3729 & 33117 & 4.43s \\ 
25 & 3995 & 36396 & 2.80s \\ 
26 & 4410 & 41980 & 2.69s \\ 
27 & 4699 & 45737 & 23.21s \\ 
28 & 5148 & 52283 & 8.45s \\ 
29 & 5460 & 56636 & 17.24s\\ 
(\Cref{thm:large-gap} applies) 30 & -  & - & - \\ 
31 & 6278 & 69109 & 34.26s \\ 
32 & 6622 & 74340 & 48.17s \\ 
33 & 7153 & 83288 & 36.37s \\ 
34 & 7520 & 89207 & 107.23s \\ 
(\Cref{thm:small-gap} applies) 35 & -  & - & - \\ 
36 & 8475 & 105934 & 747.46s \\ 
37 & 8874 & 112997 & 194.33s \\ 
38 & 9487 & 124629 & 502.20s \\ 
39 & 9909 & 132324 & 442.62s \\ 
40 & 10556 & 145392 & 129.71s \\ 
41 & 11001 & 153969 & 6117.58s \\ 
42 & 11455 & 162890 & 2088.45s \\ 
43 & 12150 & 177744 & 923.03s \\ 
44 & 12627 & 187501 & 579.50s \\ 
45 & 13356 & 203857 & 3185.11s \\ 
46 & 13856 & 214540 & 2188.39s \\ 
(\Cref{thm:large-gap} applies) 47 & -  & - & - \\ 
48 & 15142 & 244107 & 48102.44s \\ 
49 & 15674 & 256188 & 23337.97s \\ 
50 & 16485 & 276182 & 15925.77s \\ 
\bottomrule
    \end{tabular}
}
    \label{tab:sat-results}
\end{table}

\section{Concluding Remarks}
\label{sec:conclusion}
We have analyzed several aspects of~\packit, with our main result being (i) conditions for perfect games provided by~\Cref{thm:small-gap,thm:large-gap}, (ii) every $2 \times \frac{n^2}{2}$ grid admits a perfect~\packit~game, and (iii) for every $n \leq 50$ such that neither~\Cref{thm:small-gap}~nor~\Cref{thm:large-gap} applies, the $n \times n$ grid admits a perfect~\packit~game. In other words, ~\Cref{conjecture:main}  is true for all values of $n \leq 50$. 

We hope that both our mathematical and computational techniques can be applicable to similar packing problems. The \emph{``Mondrian Art Puzzle''}~\cite{garcia2023there, okuhn2018mondrian} asks for perfect packings of $n\times n$ grids but where all rectangles must use the same area. Recently, the \emph{MIT CompGeom Group} has studied perfect packings for rectangular grids with square pieces~\cite{mitcompgeomgroup2023tile}. Then, in terms of concrete~\packit~questions, we pose the following challenges:
\begin{enumerate}
    \item Prove or refute~\Cref{conjecture:main}.
    \item Is there always a perfect packing of the $m \times n$ grid when $\gamma(m, n) = K(m, n)/2$? In this case, exactly half of the turns are expansion turns. In particular, this might be easier to show assuming $m$ and $n$ are even.
    \item What is the complexity of~\packit~as a 2-player game? It is well known that complexity tends to increase in 2-player formulations (see e.g.,~\cite{ComputationalIntractabilityGuideb}), so could~\packit~be complete for the class~\textsf{PSPACE}?
\end{enumerate}


\bibliography{main}

\begin{thebibliography}{10}

\bibitem{barbay_et_al:LIPIcs.FUN.2021.23}
J\'{e}r\'{e}my Barbay and Bernardo Subercaseaux.
\newblock {The Computational Complexity of Evil Hangman}.
\newblock In Martin Farach-Colton, Giuseppe Prencipe, and Ryuhei Uehara,
  editors, {\em 10th International Conference on Fun with Algorithms (FUN
  2021)}, volume 157 of {\em Leibniz International Proceedings in Informatics
  (LIPIcs)}, pages 23:1--23:12, Dagstuhl, Germany, 2020. Schloss Dagstuhl --
  Leibniz-Zentrum f{\"u}r Informatik.

\bibitem{Barbeau2003-tc}
Edward~J Barbeau.
\newblock {\em Pell's Equation}.
\newblock Problem Books in Mathematics. Springer, New York, NY, 2003 edition,
  January 2003.

\bibitem{biereCaDiCaLKissatParacooba2020}
Armin Biere, Katalin Fazekas, Mathias Fleury, and Maximillian Heisinger.
\newblock {{CaDiCaL}}, {{Kissat}}, {{Paracooba}}, {{Plingeling}} and
  {{Treengeling Entering}} the {{SAT Competition}} 2020.
\newblock In Tomas Balyo, Nils Froleyks, Marijn Heule, Markus Iser, Matti
  J{\"a}rvisalo, and Martin Suda, editors, {\em Proc. of {{SAT Competition}}
  2020 {\textendash} {{Solver}} and {{Benchmark Descriptions}}}, volume
  B-2020-1 of {\em Department of {{Computer Science Report Series B}}}, pages
  51--53. {University of Helsinki}, 2020.

\bibitem{boutonNimGameComplete1901}
Charles~L. Bouton.
\newblock Nim, {{A Game}} with a {{Complete Mathematical Theory}}.
\newblock {\em Annals of Mathematics}, 3(1/4):35--39, 1901.

\bibitem{breukelaarTETRISHARDEVEN2004}
{\relax Ron}~Breukelaar, Erik~D. Demaine, {\relax Susan}~Hohenberger,
  Hendrik~Jan Hoogeboom, Walter~A. Kosters, and {\relax Dadvid}~{Liben-Nowell}.
\newblock {{Tetris is Hard}}, {{Even to Approximate}}.
\newblock {\em International Journal of Computational Geometry \&
  Applications}, 14:41--68, April 2004.

\bibitem{buchin2021dots}
Kevin Buchin, Mart Hagedoorn, Irina Kostitsyna, and Max van Mulken.
\newblock Dots \& boxes is {PSPACE}-complete, 2021.

\bibitem{ComputationalIntractabilityGuideb}
Erik.~D Demaine, William Gasarch, and Mohammad Hajiaghayi.
\newblock Computational {{Intractability}}: {{A Guide}} to {{Algorithmic Lower
  Bounds}}.
\newblock https://hardness.mit.edu/.

\bibitem{dusar1998}
Pierre Dusart.
\newblock {\em Autour de la fonction qui compte le nombre de nombres premiers}.
\newblock PhD thesis, Universit\'e de Limoges, 1998.
\newblock Thèse de doctorat dirigée par Robin, Guy Mathématiques
  appliquées. Théorie des nombres Limoges 1998.

\bibitem{garcia2023there}
Natalia Garc{\'\i}a-Col{\'\i}n, Dimitri Leemans, Mia M{\"u}{\ss}ig, and
  {\'E}rika Rold{\'a}n.
\newblock There is no perfect mondrian partition for squares of side lengths
  less than 1001.
\newblock {\em arXiv preprint arXiv:2311.02385}, 2023.

\bibitem{Gardner1970}
Martin Gardner.
\newblock Mathematical games.
\newblock {\em Scientific American}, 223(4):120–123, October 1970.

\bibitem{mitcompgeomgroup2023tile}
MIT~CompGeom Group, Zachary Abel, Hugo~A. Akitaya, Erik~D. Demaine, Adam~C.
  Hesterberg, and Jayson Lynch.
\newblock When can you tile an integer rectangle with integer squares?, 2023.

\bibitem{hulettMultigraphRealizationsDegree2008}
Heather Hulett, Todd~G. Will, and Gerhard~J. Woeginger.
\newblock Multigraph realizations of degree sequences: {{Maximization}} is
  easy, minimization is hard.
\newblock {\em Operations Research Letters}, 36(5):594--596, September 2008.

\bibitem{mcguire201316clue}
Gary McGuire, Bastian Tugemann, and Gilles Civario.
\newblock There is no 16-clue sudoku: Solving the sudoku minimum number of
  clues problem, 2013.

\bibitem{okuhn2018mondrian}
Cooper O'Kuhn.
\newblock The mondrian puzzle: A connection to number theory, 2018.

\bibitem{schoenfeldSharperBoundsChebyshev1976}
Lowell Schoenfeld.
\newblock Sharper {{Bounds}} for the {{Chebyshev Functions}} $\theta(x)$ and
  $\psi(x)$. {{II}}.
\newblock {\em Mathematics of Computation}, 30(134):337--360, 1976.

\end{thebibliography}

\bibliographystyle{plain}
\newpage
\appendix

\paragraph{\textbf{Acknowledgments.}}
The authors are partially supported by the National Science Foundation (NSF) grant CCF-2108521. We thank FUN'2024 reviewers for their feedback and suggestions. We also thank Richard Green, for his comments and his blog post about our paper. The last author thanks Abigail Kamenenetsky for her help with a web implementation of the game.

\section{Selected perfect games of~\packit}

\input{fig-24}

\input{fig-36}

\input{fig-50}







\end{document}